\setlist[enumerate,1]{label=\textup{(\arabic*)}} 
\newlist{enumroman}{enumerate}{1}
\setlist[enumroman]{label={\textup{(\roman*)}}, ref={\textup{\thetheorem(\roman*)}}}
\theoremstyle{plain}
\newtheorem{theorem}{Theorem}[section]
\newtheorem{lemma}[theorem]{Lemma}
\newtheorem{proposition}[theorem]{Proposition}
\theoremstyle{definition}
\newtheorem{definition}{Definition}[section]
\numberwithin{equation}{section}
\newcommand{\R}{\mathbb R}
\newcommand{\Q}{\mathbb Q}
\newcommand{\Z}{\mathbb Z}
\newcommand{\N}{\mathbb N}
\newcommand{\X}{\mathbb X}
\newcommand{\abs}[1]{\lvert #1\rvert}
\newcommand{\angles}[1]{\langle #1\rangle}
\renewcommand{\phi}{\varphi}
\renewcommand{\bar}{\overline}
\newcommand{\dsum}{\bigoplus}
\newcommand{\sm}{\setminus}
\newcommand{\sub}{\subseteq}
\newcommand{\psub}{\subset}
\newcommand{\usim}{\mathord{\sim}}
\newcommand{\ull}{\mathord{\ll}}
\renewcommand{\nsim}{\not\sim} 
\newcommand{\nll}{\centernot\ll} 
\DeclareMathOperator{\aut}{Aut}
\DeclareMathOperator{\CB}{CB}
\begin{document}
\title{Space of orders with finite Cantor-Bendixson rank}
\author{Waseet Kazmi}
\address{}
\email{waseetkazmi@outlook.com}

\subjclass[2020]{Primary 06F15, 20F60}
\keywords{Ordered groups, space of orders, Cantor-Bendixson rank, semidirect product} 

\begin{abstract}
The goal of this paper is to show the following result: For every integer $n\geq 2$ there is a countable orderable group such that its space of orders is countable and has Cantor-Bendixson rank $n$. We show this by explicitly constructing a family of orderable groups with the desired properties.
\end{abstract}

\maketitle

\section{Introduction}
Let $G$ be a group and let $<$ be a strict total order on $G$. The pair $(G,<)$ is called an \textit{ordered group} if the order $<$ is both left and right invariant, that is, $a<b$ implies $ca<cb$ and $ac<bc$ for all $a,b,c\in G$. 
A group is called an \textit{orderable group} if it admits a strict total order which is bi-invariant.
We can equivalently define a group $G$ to be orderable if it admits a subset $P\sub G$, called the \textit{positive cone}, satisfying:
\begin{enumerate}
\item $P \sqcup P^{-1} \sqcup \{1\}= G$;
\item $P\cdot P \sub P$; and
\item $gPg^{-1}\sub P$ for all $g\in G$.
\end{enumerate}
There is a correspondence between the orderings and the positive cones of $G$. Given an ordering $<$, we can define the set $P=\{g\in G\mid 1<g\}$ to be the positive cone. Conversely, given a positive cone $P$, we can define an invariant strict total order on $G$ by the recipe $g<h$ if and only if $g^{-1}h\in P$ for all $g,h\in G$.
We define the \textit{space of orders} of an orderable group $G$, denoted $\X(G)$, to be the set of all positive cones of $G$. In notation,
\[ \X(G)=\{P\sub G \mid P\text{ is the positive cone of an order on } G\}.\]
The space of orders $\X(G)$ is a subspace of $2^G$, where $2^G$ is endowed with the usual product topology. More so, it can be shown that $\X(G)$ is a closed subset of $2^G$ and since $2^G$ is compact, it readily follows that $\X(G)$ is also compact. By a slight abuse of notation, we will use the correspondence between positive cones and orderings to write ${<}\in \X(G)$ whenever no confusion will arise from doing so.

Here is another useful way to describe the topology of $\X(G)$.
Suppose ${<}$ is an ordering of $G$. Consider a finite string of inequalities $g_1< \cdots<g_n$ for $g_i\in G$. The set of orderings of $G$ that satisfy these inequalities forms an open neighborhood of $<$ in $\X(G)$. 
More so, the set of all such neighborhoods forms a basis for the topology of $\X(G)$. Equivalently, one can multiply the inequalities as necessary and see that a basis for $\X(G)$ consists of all sets of orderings in which some specified finite set of elements of $G$ are all positive.
 
In an ordered group $(G,<)$, the \textit{absolute value} $\abs g$ of an element $g\in G$ is defined to be $\abs g = \max(g, g^{-1})$.
We now introduce two important relations that will be central to our discussion in proving our main result.
\begin{definition} 
Let $(G,<)$ be an ordered group and let $g,h \in G$. We define $g\sim h$ if there exist positive integers $m$ and $n$ such that $\abs g < \abs h^m$ and $\abs h <\abs g^n$.
We say $g$ and $h$ are \textit{Archimedean equivalent} if $g\sim h$.
We say $g$ is \textit{Archimedean less than} $h$, denoted by $g\ll h$, if $\abs g^n<\abs h$ for all positive integers $n$. 
\end{definition}
We call an ordered group \textit{Archimedean} if all nonidentity elements are Archimedean equivalent. 
A well-known fact is that every Archimedean ordered group is abelian.
We will refer to the equivalence classes of $G$ under the Archimedean equivalence relation as the \textit{Archimedean classes} of $G$.  We will write $[g]$ to denote the Archimedean class of $g\in G$.
We can define a linear order on the set of Archimedean classes of $G$ by declaring $[g]\ll [h]$  if and only if  $g\ll h$ for all $g,h\in G$. 
(We will abuse notation and denote the order on the Archimedean classes also by $\ull$.)
It follows from the definitions that the identity of $G$ forms an Archimedean class by itself---moreover, it is the least class under the prescribed ordering of the Archimedean classes.

Recall that if $X$ is a topological space, a point $x\in X$ is called an \textit{isolated point} if $\{x\}$ is open and is called a \textit{limit point} if every open neighborhood of $x$ contains a point other than $x$.
If $A\sub X$, we say that a point $x\in X$ is a \textit{limit point of $A$} if every open neighborhood of $x$ contains a point of $A$ other than $x$.
In the context of the space of orders, an isolated point in $\X(G)$ corresponds to an order on $G$ that is the unique order satisfying some finite string of inequalities. Equivalently, an isolated order is the unique order in which some fixed finite set of elements of $G$ are all positive.

The \textit{Cantor-Bendixson derivative} of a topological space $X$, denoted $X'$, is the set of nonisolated points of $X$; equivalently, $X'$ is the set of limit points of $X$. For each ordinal $\alpha$, define $X^{(\alpha)}$ recursively as follows:
\begin{enumerate}
\item $X^{(0)}=X$,
\item $X^{(\alpha+1)}= (X^{(\alpha)})'$, 
\item $X^{(\alpha)}= \bigcap_{\gamma<\alpha} X^{(\gamma)}$ if $\alpha$ is a limit ordinal.
\end{enumerate}
In general, if $X$ is an arbitrary topological space, then $X'$ is not necessarily closed. 
However, if $X$ is a Hausdorff space, then $X'$ is closed in $X$ and therefore $X^{(\alpha)}$ is a closed subset of $X$ for all $\alpha$.
It is clear from the definition that the subspaces $X^{(\alpha)}$ form a nonincreasing sequence, that is, $X^{(\delta)}\sub X^{(\gamma)}$ whenever $\gamma<\delta$.
It can be shown that this transfinite sequence of derivatives must eventually be constant.
\begin{proposition} \label{prop:CBrankK}
Let $X$ be a topological space. If $\abs X=\kappa$ for some cardinal number $\kappa$, then there is an ordinal $\alpha<\kappa^+$ such that $X^{(\alpha)}=X^{(\alpha+1)}$. 
\end{proposition}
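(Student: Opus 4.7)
The plan is to argue by contradiction using a standard cardinality counting of ``stages at which points are newly removed.''

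First I would note that the sequence $\{X^{(\alpha)}\}$ is nonincreasing, which is already recorded in the excerpt. Suppose toward a contradiction that $X^{(\alpha)}\neq X^{(\alpha+1)}$ for every $\alpha<\kappa^+$. Then for each $\alpha<\kappa^+$ one can pick (using, say, the axiom of choice) a witness point
\[ x_\alpha \in X^{(\alpha)}\sm X^{(\alpha+1)}, \]
that is, a point which is isolated in the subspace $X^{(\alpha)}$.

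The key step is then to observe that the assignment $\alpha\mapsto x_\alpha$ is injective. Indeed, if $\alpha<\beta<\kappa^+$, then by monotonicity of the derived sequence $X^{(\beta)}\sub X^{(\alpha+1)}$, hence $x_\beta\in X^{(\alpha+1)}$ while $x_\alpha\notin X^{(\alpha+1)}$, so $x_\alpha\neq x_\beta$. This yields an injection from $\kappa^+$ into $X$, contradicting $\abs X=\kappa$. Therefore some $\alpha<\kappa^+$ must satisfy $X^{(\alpha)}=X^{(\alpha+1)}$.

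There is really no hard step here: the only thing to be slightly careful about is the choice of witnesses (a mild use of choice) and the direction of the inclusion at limit stages, but since the sequence is nonincreasing, $\alpha<\beta$ automatically forces $X^{(\beta)}\sub X^{(\alpha+1)}$. One could also frame the argument without explicit choice by noting that the well-ordered chain of distinct proper subsets $X\supsetneq X^{(1)}\supsetneq X^{(2)}\supsetneq\cdots$ of $X$ has order type at most $\abs{\mathcal P(X)}^+$-bounded, but the cleaner witness-point argument above gives the sharper bound $\kappa^+$ claimed in the proposition.
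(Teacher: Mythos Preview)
Your argument is correct and is essentially identical to the paper's proof: assume $X^{(\alpha)}\neq X^{(\alpha+1)}$ for all $\alpha<\kappa^+$, choose witnesses $x_\alpha\in X^{(\alpha)}\sm X^{(\alpha+1)}$, and derive a contradiction from the resulting injection $\kappa^+\hookrightarrow X$. The only difference is cosmetic---you spell out the injectivity step via monotonicity and add remarks on the use of choice, whereas the paper simply asserts injectivity.
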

\begin{proof}
Assume for a contradiction, $X^{(\alpha)}\neq X^{(\alpha+1)}$ for all ordinals $\alpha<\kappa^+$.  Then for all $\alpha<\kappa^+$, there exists some $x_{\alpha}\in X$ such that $x_\alpha\in X^{(\alpha)}\sm X^{(\alpha+1)}$. 
This gives a map from $\kappa^+$ into $X$ via $\alpha\mapsto x_\alpha$ that is injective. This is a contradiction since by assumption $\abs X=\kappa$.  Hence, there must exist some $\alpha<\kappa^+$ such that $X^{(\alpha)}= X^{(\alpha+1)}$.
\end{proof}

\begin{definition}
Let $X$ be a topological space. 
The \textit{Cantor-Bendixson rank} of $X$, denoted $\CB(X)$, is the least ordinal $\alpha$ such that $X^{(\alpha)}=X^{(\alpha+1)}$. 
We say a point $x\in X$ has \textit{Cantor-Bendixson rank} $\alpha$ if $\alpha$ is the least ordinal such that $x\in X^{(\alpha)}$ but $x\notin X^{(\alpha+1)}$. 
We write $\CB(x)=\alpha$ to denote $x$ has Cantor-Bendixson rank $\alpha$.
\end{definition}
\begin{proposition}
Let $X$ be a topological space and let $A\sub X$. Suppose $\CB(a)\geq \alpha$ for all $a\in A$, or equivalently, $A\sub X^{(\alpha)}$. If $x\in X$ is a limit point of $A$, then $\CB(x)\geq \alpha+1$.
\end{proposition}
\begin{proof}
If $A\sub X^{(\alpha)}$, then $A'\sub ( X^{(\alpha)} )'=X^{(\alpha+1)}$. Since $x$ is a limit point of $A$, we have that $x\in A'\sub X^{(\alpha+1)}$ and therefore $\CB(x)\geq \alpha+1$.
\end{proof}
We will be interested in the case when $G$ is a countable group and $\X(G)$ is a countable subset of $2^G$. The following proposition will be useful in this case.
\begin{proposition} 
If $C$ is a countable closed subset of $2^\N$, then the Cantor-Bendixson rank of $C$ is a countable ordinal $\alpha$ such that $\alpha$ is a successor ordinal and $C^{(\alpha)}=\emptyset$.
\end{proposition}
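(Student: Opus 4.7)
The plan is to invoke \cref{prop:CBrankK} to get that $\CB(C)=\alpha$ is a countable ordinal (since $\abs C\leq\aleph_0$ forces $\alpha<\aleph_1$). It then remains to establish that $C^{(\alpha)}=\emptyset$ and that $\alpha$ is a successor ordinal. Throughout I would tacitly assume $C$ is nonempty, as otherwise $\CB(C)=0$ and the statement would need to be read vacuously; for the applications in this paper, $\X(G)$ is always nonempty.

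First I would show $C^{(\alpha)}=\emptyset$. By the definition of $\CB(C)$, we have $C^{(\alpha)}=C^{(\alpha+1)}=(C^{(\alpha)})'$, so every point of $C^{(\alpha)}$ is a limit point of $C^{(\alpha)}$. Since $2^\N$ is Hausdorff, each derivative $C^{(\gamma)}$ is closed in $C$ and hence in $2^\N$; in particular $C^{(\alpha)}$ is a closed subset of $2^\N$ with no isolated points. If $C^{(\alpha)}$ were nonempty, the standard Cantor--Bendixson construction---a binary tree of nested nonempty clopen subsets of $C^{(\alpha)}$, splitting at each level because no point is isolated---would produce a homeomorphic copy of $2^\N$ inside $C^{(\alpha)}\sub C$, contradicting the countability of $C$.

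Next I would rule out $\alpha$ being a limit ordinal (we already have $\alpha>0$ since $C\neq\emptyset$). If $\alpha$ were a limit, then for every $\gamma<\alpha$ the minimality of $\alpha$ would give $C^{(\gamma)}\neq C^{(\gamma+1)}$, which forces $C^{(\gamma)}\neq\emptyset$ (otherwise $C^{(\gamma+1)}$ would also be empty). Thus $\{C^{(\gamma)}\}_{\gamma<\alpha}$ would be a nested family of nonempty closed subsets of the compact space $2^\N$, having the finite intersection property, so by compactness $C^{(\alpha)}=\bigcap_{\gamma<\alpha}C^{(\gamma)}\neq\emptyset$, contradicting the previous step. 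Hence $\alpha$ must be a successor.

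The main subtlety is the successor conclusion: emptiness of $C^{(\alpha)}$ is essentially just the classical Cantor--Bendixson theorem, but excluding the limit case relies crucially on $C$ sitting inside the \emph{compact} space $2^\N$---without ambient compactness, a transfinite nested sequence of nonempty closed sets can perfectly well collapse to the empty set at a limit stage.
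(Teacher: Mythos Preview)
Your proof is correct and follows essentially the same approach as the paper: invoke \cref{prop:CBrankK} for countability of the rank, argue that $C^{(\alpha)}$ is a perfect closed subset of $2^{\N}$ and hence empty (the paper simply cites that nonempty perfect sets are uncountable, while you sketch the binary-tree construction), and then use compactness of $2^{\N}$ to rule out the limit case. Your explicit remark that the nonemptiness of $C$ is tacitly assumed, and your observation about the role of compactness, are both apt and match the paper's treatment.
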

\begin{proof}
Let $C$ be a nonempty countable closed subset of $2^{\N}$. By \cref{prop:CBrankK}, the Cantor-Bendixson rank of $C$ is a countable ordinal, say $\alpha$.  We claim that $C^{(\alpha)}=\emptyset$. Since $C$ is countable and closed, it must contain an isolated point. Because otherwise $C$ would be a countable, perfect subset of $2^{\N}$ and that is impossible since perfect sets are uncountable.  
In addition, every nonempty derivative of $C$ must also contain an isolated point because they are also closed, and either finite or countable.  Thus $C^{(\alpha)}= C^{(\alpha+1)}$ implies that $C^{(\alpha)}=\emptyset$, as required. 

Next we want to show that $\alpha$ is a successor ordinal. Suppose for a contradiction $\alpha$ is a limit ordinal. Then $C^{(\alpha)}= \bigcap_{\gamma<\alpha}C^{(\gamma)}$ where each $C^{(\gamma)}$ is a nonempty closed set. It follows by compactness that $\bigcap_{\gamma<\alpha}C^{(\gamma)}$ must be nonempty and we have a contradiction. Therefore $\alpha$ is a successor ordinal.
\end{proof} Observe that if $C\sub 2^\N$ is a nonempty countable closed set and $C$ has Cantor-Bendixson rank $\alpha$, where $\alpha=\beta+1$ for some ordinal $\beta$, then $C^{(\beta)}$ will be nonempty, finite and contain only isolated points. 

The notion of Cantor-Bendixson rank has been used in the literature to prove various results about the space of orders. For example, in \cite{linnell2011}, it is shown that the number of left orders of a group is either finite or uncountable. 
The Cantor-Bendixson rank is also related to the study of dense and discrete orderings of a group, see for example \cite{clay2010,clay2020}.
The goal of this paper is to prove that for every integer $n\geq 2$ there is a countable, orderable group $G_n$ such that its space of orders $\X(G_n)$ is countable and has Cantor-Bendixson rank $n$.
The first example in the literature of an orderable group with countably many orders is given by Buttsworth in \cite{buttsworth1971}. 
Buttsworth constructs a countable orderable group using semidirect products and exploits the conjugation actions inside the semidirect products to show that the group in question has exactly countably many orders. Buttsworth does not discuss the Cantor-Bendixson rank of the space of orders of the group that is constructed.
We will generalize the construction given in \cite{buttsworth1971} to build a family of orderable groups with the desired properties as stated in our goal above.

\section{Preliminaries}
We state some preliminary results that we will need. 
The first result gives a sufficient condition for orderability of a semidirect product.
\begin{proposition} \label{prop:semiorderable}
Let $(N,<_N)$ and $(H,<_H)$ be ordered groups. Let $G=N\rtimes_\phi H$ be a semidirect product with $\phi\colon H\to \aut (N)$. Suppose $\phi(h)\colon N\to N$ is order-preserving with respect to $<_N$ for all $h\in H$. 
Then $G$ is orderable and the following recipe defines an ordering 
\[ 1<_G nh \text{ if and only if } 1<_H h \text{, or else } h=1 \text{ and } 1<_N n.\]
\end{proposition}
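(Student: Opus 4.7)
The plan is to define
\[ P = \{nh \in G \mid 1 <_H h, \text{ or } h=1 \text{ and } 1 <_N n\} \]
(using the unique decomposition of elements of $G=N\rtimes_\phi H$ as $nh$ with $n\in N$, $h\in H$) and directly verify the three positive cone axioms. The recipe then gives the ordering via the correspondence between positive cones and bi-invariant orderings already recalled in the introduction.

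First I would dispose of axiom (1), the disjoint cover $P\sqcup P^{-1}\sqcup\{1\}=G$. Using $(nh)^{-1}=\phi(h^{-1})(n^{-1})\,h^{-1}$, I observe that the $H$-component of $(nh)^{-1}$ is $h^{-1}$, so $nh\in P^{-1}$ precisely when $h<_H 1$, or $h=1$ and $n<_N 1$. The trichotomy of $<_H$ on $h$ and (when $h=1$) of $<_N$ on $n$ then gives the partition of $G$ immediately.

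Next I would check closure under multiplication, axiom (2). For $(n_1 h_1)(n_2 h_2)=n_1\phi(h_1)(n_2)\,h_1 h_2$, the $H$-part of the product is $h_1 h_2$. A short case analysis on whether each $h_i$ equals $1$ suffices: if either $h_i>_H 1$ then $h_1h_2>_H 1$ by bi-invariance of $<_H$, so the product lies in $P$; and if $h_1=h_2=1$ then the product is $n_1 n_2$ with $n_1, n_2>_N 1$, hence $n_1 n_2>_N 1$, and again the product lies in $P$.

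The last axiom, conjugation invariance $gPg^{-1}\sub P$ for $g=nh$ and $n'h'\in P$, is where the hypothesis on $\phi$ becomes essential; I expect this to be the main step. Computing
\[ (nh)(n'h')(nh)^{-1} = n\,\phi(h)(n')\,\phi(hh'h^{-1})(n^{-1})\cdot hh'h^{-1}, \]
I see that the $H$-part is $hh'h^{-1}$. In the case $h'>_H 1$, bi-invariance of $<_H$ gives $hh'h^{-1}>_H 1$, so the conjugate is in $P$. In the case $h'=1$ and $n'>_N 1$, the expression collapses to $n\,\phi(h)(n')\,n^{-1}$ with trivial $H$-part; since $\phi(h)$ is order-preserving and fixes $1$, $\phi(h)(n')>_N 1$, and then bi-invariance of $<_N$ (applied to conjugation by $n$ inside $N$) yields $n\,\phi(h)(n')\,n^{-1}>_N 1$, so again the conjugate is in $P$. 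This is the only place where both hypotheses---that $<_H$ is bi-invariant and that each $\phi(h)$ preserves $<_N$---are really needed, so I would flag it as the heart of the argument while the remaining axioms are routine bookkeeping.
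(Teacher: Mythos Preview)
Your proposal is correct and matches the paper's approach: the paper simply says the result ``follows from a routine verification that our prescribed ordering is bi-invariant under the given assumptions,'' and your argument is precisely that routine verification, carried out via the positive-cone axioms rather than directly checking bi-invariance (which is equivalent). Your case analysis and computations are accurate, including the identification of conjugation invariance as the place where the order-preserving hypothesis on $\phi(h)$ is genuinely used.
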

\begin{proof}
The proof of this follows from a routine verification that our prescribed ordering is bi-invariant under the given assumptions.
\end{proof}
We collect several useful facts about the relations $\usim$ and $\ull$. The proofs of these can be found in \cite[\S1.5]{kazmi2023}.
\begin{proposition}
Let $(G,<)$ be an ordered group. 
\begin{enumroman}
\item \label[prop]{prop:archlessthanpos}
If $g\nsim h$, then $\abs g<\abs h$ if and only if $g\ll h$.
\item  If $g\ll h$, then $h,gh$ and $hg$ all have the same sign, that is to say, all elements are either positive or negative.
\item If $a\ll g$ and $b\ll g$, then $ab\ll g$.
\item If $a\ll b$, then $b\sim ab\sim ba$.
\item If $g\nsim h$, $a\ll g$ and $b\ll h$, then $ab\ll gh$.
\item \label[prop]{prop:nprodclass}
Let $a_1,\ldots, a_n\in G$. Suppose there exists an $i$ such that $a_j\ll a_i$ for all $j\neq i$.  Then $a_1\cdots a_n \sim a_i$.
\end{enumroman}
\end{proposition}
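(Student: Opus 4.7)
The plan is to treat the six items in order, since each relies on the earlier ones, and to set up a single submultiplicativity bound for absolute values that drives the rest. As a preliminary, I would record that for any $x,y \in G$, bi-invariance gives $xy \leq \abs{x}y \leq \abs{x}\abs{y}$ and similarly $(xy)^{-1} = y^{-1}x^{-1} \leq \abs{y}\abs{x}$, so $\abs{xy} \leq \max(\abs{x}\abs{y},\abs{y}\abs{x})$. Iterating, $\abs{x_1\cdots x_n}$ is bounded by the larger of the forward and reverse products of the $\abs{x_i}$, and hence by $M^n$ where $M = \max_i \abs{x_i}$.

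For (i), one direction is immediate from the definition of $\ll$. Conversely, suppose $g \nsim h$ and $\abs g < \abs h$. Since $\abs h \geq 1$, the inequality $\abs g < \abs h^m$ holds for every $m \geq 1$, so failure of Archimedean equivalence forces $\abs h \geq \abs g^n$ for every $n$. If equality $\abs h = \abs g^n$ ever held, then using $\abs g > 1$ one would have both $\abs g < \abs g^n = \abs h$ and $\abs h < \abs g^{n+1}$, contradicting $g \nsim h$; so $\abs g^n < \abs h$ for all $n$, i.e., $\abs g \ll \abs h$. For (ii), assume $g \ll h$ and say $h > 1$ (the case $h < 1$ reduces to it via $g^{-1} \ll h^{-1}$). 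If $g > 1$ then $gh > 1$ trivially; otherwise $\abs g = g^{-1}$ and $\abs g < h$ rearranges to $g > h^{-1}$, so $gh > 1$ again. The argument for $hg$ is analogous.

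For (iii), if $a,b \ll g$ and $M = \max(\abs a,\abs b)$, the submultiplicativity bound gives $\abs{(ab)^n} \leq M^{2n}$; since $M \in \{\abs a, \abs b\}$ and both are $\ll g$, we have $M^{2n} < \abs g$ for every $n$, hence $ab \ll g$. For (iv), assume $a \ll b$. The bound $\abs{ab} \leq \abs b^2$ together with $\abs b > 1$ rules out $b \ll ab$. If moreover $ab \nsim b$, then the trichotomy implicit in (i) forces $ab \ll b$; combined with $a^{-1} \ll b$ (note $\abs{a^{-1}} = \abs a$), applying (iii) with $a^{-1}$ and $ab$ in place of $a$ and $b$ yields $a^{-1}(ab) = b \ll b$, a contradiction. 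The assertion $ba \sim b$ is symmetric.

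Items (v) and (vi) then follow formally. For (v), since $g \nsim h$, part (i) lets me assume $g \ll h$; then $a \ll g \ll h$ gives $a \ll h$ by transitivity, so (iii) yields $ab \ll h$, while (iv) gives $gh \sim h$, whence $ab \ll gh$. For (vi), I would split the product as $(a_1\cdots a_{i-1})\, a_i \,(a_{i+1}\cdots a_n)$; by (iii) each bracketed factor is $\ll a_i$, and two applications of (iv) collapse the full product to the Archimedean class of $a_i$. The main obstacle I anticipate is establishing the submultiplicativity bound cleanly in the non-abelian setting, where $\abs x\abs y$ and $\abs y \abs x$ need not agree; the cleverest move elsewhere is the inversion trick $b = a^{-1}(ab)$ that derives (iv) from (iii).
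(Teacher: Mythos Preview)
Your proposal is correct. The paper itself does not supply a proof of this proposition; it simply cites the author's thesis \cite[\S1.5]{kazmi2023}. So there is nothing in the paper to compare your argument against, but your approach is the standard one: establish the submultiplicative bound $\abs{x_1\cdots x_n}\le M^n$ with $M=\max_i\abs{x_i}$, then build (i)--(vi) on top of it in the order you describe.

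A couple of small points you may want to tighten. In (i) your argument implicitly uses $\abs g>1$ when you invoke $\abs g^n<\abs g^{n+1}$; the edge case $g=1$ is trivial but should be dispatched. In (v) and (vi) you silently use the compatibility of $\ll$ with $\sim$, i.e., that $x\ll y$ and $y\sim z$ force $x\ll z$; this follows from the trichotomy for nonidentity elements implicit in (i), and is worth a one-line justification. None of this affects correctness.
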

The following results are mentioned in \cite{buttsworth1971} without proof, see \cite[Proposition~3.1.1]{kazmi2023} for proofs.
\begin{proposition} 
Let $(G,<)$ be an ordered group. Suppose $x,y_i\in G$ where $i\in \Z$.
\begin{enumroman}
\item \label[prop]{prop:prelimresultsA}
If $x^{-1}y_i^{k_i}x = y_i^{l_i}$ for all $i\in \Z$ with $k_i,l_i\in \Z$ and $k_i\neq l_i$, then $y_i\ll x$ and $y_i\sim y_j$ implies $k_il_j=k_j l_i$.
\item \label[prop]{prop:prelimresultsB}
If $x^{-1}y_i x=y_{i+1}$ and $y_i\sim y_{i+1}$ does not hold, then either
\begin{gather*}
\cdots\ll y_{i-1}\ll y_i \ll y_{i+1}\ll \cdots\ll x
\shortintertext{or}
\cdots\ll y_{i+1}\ll y_i \ll y_{i-1}\ll \cdots\ll x.
\end{gather*}
\end{enumroman}
\end{proposition}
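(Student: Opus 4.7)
The plan is to exploit the Archimedean-class convex-quotient structure of a bi-ordered group for both parts, invoking the H\"older-type fact that for each Archimedean class $\alpha$ the convex subgroups $H_\alpha=\{g:|g|\le\alpha\}$ and $H_{<\alpha}=\{g:|g|\ll\alpha\}\cup\{1\}$ are normal, with $H_\alpha/H_{<\alpha}$ an Archimedean ordered abelian group embedded in $(\R,+)$. For part~(i)(a), set $\alpha=|y_i|$ and rewrite the hypothesis as the commutator identity $y_i^{l_i-k_i}=(x^{-1}y_i^{k_i}x)y_i^{-k_i}=[x^{-1},y_i^{k_i}]$. Since $k_i\ne l_i$, the left side has Archimedean class $\alpha$. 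If $|x|\le\alpha$, then $x^{-1}\in H_\alpha$, so $[x^{-1},y_i^{k_i}]$ is a commutator of two elements of $H_\alpha$; since $H_\alpha/H_{<\alpha}$ is abelian, this commutator lies in $H_{<\alpha}$, giving $|y_i^{l_i-k_i}|<\alpha$, a contradiction. Hence $|y_i|\ll|x|$.

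For part~(i)(b), with $\alpha=|y_i|\sim|y_j|$ and $|x|\gg\alpha$ from~(a), conjugation by $x$ descends to an order-preserving automorphism of $H_\alpha/H_{<\alpha}\hookrightarrow(\R,+)$, which must be multiplication by some $\lambda>0$. Applying this to $x^{-1}y_i^{k_i}x=y_i^{l_i}$ in the quotient, with $r_i>0$ denoting the image of $y_i$, yields $\lambda\cdot k_ir_i=l_ir_i$, so $\lambda=l_i/k_i$. The analogous computation for $y_j$ gives $\lambda=l_j/k_j$, whence $k_il_j=k_jl_i$.

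For part~(ii), I would first establish the direction lemma: in a bi-ordered group, if $g>1$ and $|xgx^{-1}|\gg|g|$, then $|x^{-1}gx|\ll|g|$. Indeed $|xgx^{-1}|\gg|g|$ means $g^n<xgx^{-1}$ for all $n\ge1$, so by bi-invariance $(x^{-1}gx)^n=x^{-1}g^nx<g$, and since $x^{-1}gx>1$ its bounded powers force $|x^{-1}gx|\ll|g|$. The hypothesis $|y_i|\not\sim|y_{i+1}|$ then forces one of $|y_{i+1}|\gg|y_i|$ or $|y_{i+1}|\ll|y_i|$. In the former case, since $xy_ix^{-1}=y_{i-1}$ the lemma at $g=y_i$ gives $|y_{i-1}|\ll|y_i|$, while $xy_{i+1}x^{-1}=y_i$ with $|y_i|\ll|y_{i+1}|$ lets the lemma at $g=y_{i+1}$ give $|y_{i+2}|\gg|y_{i+1}|$; induction then produces the strictly increasing chain, and the symmetric case yields the reversed chain. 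For boundedness by $|x|$, if $|y_j|\gg|x|$ then part~(iv) of the previous proposition gives $|x^{-1}y_jx|\sim|y_j|$, contradicting $|y_{j+1}|\not\sim|y_j|$; if $|y_j|\sim|x|$ then in $H_{|x|}/H_{<|x|}$ the element $x$ acts trivially, so $x^{-1}y_jx\equiv y_j$ forces $|y_{j+1}y_j^{-1}|<|x|$, again a contradiction.

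The main obstacle is invoking the H\"older-type structure carefully: one must justify that convex subgroups of a bi-ordered group are normal and that inner automorphisms of $H_\alpha$ act trivially on the abelian quotient $H_\alpha/H_{<\alpha}$. These facts underpin the commutator bound in~(i)(a), the well-definedness of $\lambda$ in~(i)(b), and the $|x|$-boundedness step in~(ii).
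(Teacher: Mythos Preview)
The paper does not prove this proposition; it merely cites external references (Buttsworth and the author's thesis). So I assess your argument on its own.

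Part~(i) is essentially correct, but be careful with the auxiliary claim you flag at the end: convex subgroups of a bi-ordered group are \emph{not} normal in general (in $\Z\wr\Z$ with the obvious lexicographic bi-order, the convex subgroup $\bigoplus_{i\le0}\Z$ is not preserved under conjugation by the shift). What your argument actually uses is weaker and true. For (i)(a) you only need that $H_{<\alpha}$ is normal in $H_\alpha$, which follows since $|h|\ll|g|$ with $g,h>1$ gives $(ghg^{-1})^n=gh^ng^{-1}<g$ for all $n$; the Archimedean quotient is then abelian and your commutator bound works. For (i)(b) you need that conjugation by $x$ preserves $H_\alpha$ and $H_{<\alpha}$ even though $x\notin H_\alpha$; this holds because the hypothesis $x^{-1}y_i^{k_i}x=y_i^{l_i}$ shows conjugation by $x$ fixes the class $\alpha$, while conjugation always permutes Archimedean classes order-preservingly. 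With these corrections your $\lambda=l_i/k_i=l_j/k_j$ computation goes through (note $r_i\neq0$ suffices; it need not be positive).

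There is a real gap in part~(ii). Your direction lemma establishes $|xgx^{-1}|\gg|g|\Rightarrow|x^{-1}gx|\ll|g|$ (and its $x\leftrightarrow x^{-1}$ twin), which correctly yields $|y_{i-1}|\ll|y_i|$ from $|y_{i+1}|\gg|y_i|$. But deducing $|y_{i+2}|\gg|y_{i+1}|$ from $|xy_{i+1}x^{-1}|=|y_i|\ll|y_{i+1}|$ is the \emph{converse} of your lemma, which you have not proved. The simplest repair bypasses the lemma entirely: conjugation by any element is an order automorphism and hence preserves $\ll$ (if $|a|^n<|b|$ for all $n$ then $|gag^{-1}|^n=g|a|^ng^{-1}<g|b|g^{-1}=|gbg^{-1}|$), so a single relation $|y_i|\ll|y_{i+1}|$ propagates to $|y_{i+k}|\ll|y_{i+k+1}|$ for every $k\in\Z$ by conjugating by powers of $x$. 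Your argument bounding the chain by $|x|$ is fine, though write $\ll$ rather than $<$ in ``$|y_{j+1}y_j^{-1}|<|x|$''.
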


\section{Construction of \texorpdfstring{$G_n$}{Gn}}
In this section, we will be generalizing the work of Buttsworth and following the notation found in \cite{buttsworth1971}. We begin by constructing our family of orderable groups.
Let $A$ be the additive group of dyadic rationals, that is, \[A=\Big\{ \frac{m}{2^k}\mid m,k\in\Z \Big\}.\]
Let $X$ be a subset of $A$ defined by \[ X=\{x\in A\mid 0\leq x<1\}.\]
Fix an integer $n\geq 2$.  For each integer $1\leq i\leq n$,  we will define the group $H^i$.
For all $z\in\Z$ and $x\in X$, let $H^i_{z,x}$ be copies of the group $(\Q,+)$. For each $z\in \Z$, define the direct sum
\begin{gather*}
H_z^i=\dsum_{x\in X} H_{z,x}^i.
\shortintertext{Let}
H^i = \dsum_{z\in \Z}H_z^i = \dsum_{z\in \Z}\dsum_{x\in X}H_{z,x}^{i}
\shortintertext{and}
P_n=H^1\times  \cdots\times H^n. 
\end{gather*}

First, we want to construct a semidirect product $M_n = P_n \rtimes A$.
To do this, let us first fix some notation.  
We will write $h_{z,x}^{i,r} \in H_{z,x}^i$ to denote the number $r\in\Q$. 
If $r=1$, we will usually write $h_{z,x}^{i}$ instead of $h_{z,x}^{i,1}$.
Let $\lambda^\alpha$ for $\alpha \in A$ denote an arbitrary element of $A$ and let $\zeta^\beta$ for $\beta\in\Z$ denote an arbitrary element of $\Z$. 
Fix a collection of distinct prime numbers $p_1,\ldots,p_n$. 
To construct our semidirect product we define a group action of each element of $A$ on the elements $h_{z,x}^{i,r}$. The action is defined as 
\begin{equation} \label{eq:gpactionHi}
\lambda^{-\alpha}h_{z,x}^{i,r} \lambda^\alpha= h_{z,x+\alpha2^z-m}^{i,rp_i^m}
\end{equation} where $m\leq x+\alpha2^z<m+1$ for $m\in\Z$.
Of course, this action is only defined on the basic components of $P_n$ but we can extend this action to the whole group $P_n$ componentwise.  
It can be checked that this action defines a group homomorphism from $A$ into $\aut(P_n)$, thus 
we can construct the semidirect product $M_n = P_n \rtimes A$.
Next, we define an action on $M_n$ by $\Z$ via  
\begin{gather}
  \zeta^{-\beta} h_{z,x}^{i,r} \zeta^\beta = h_{z+\beta,x}^{i,r} \label{eq:gpactionZHi}
\shortintertext{and} 
  \zeta^{-\beta}\lambda^\alpha \zeta^\beta= \lambda\strut^{\frac{\alpha}{2^\beta}}. \label{eq:gpactionZA}
\end{gather}
Using the above group actions, we can finally construct the semidirect product 
\[ G_n = M_n\rtimes \Z = (P_n \rtimes A)\rtimes \Z. \]
We want to prove two facts about $G_n$: it has exactly countably many orders and its space of orders has Cantor-Bendixson rank $n$.
Our first step is showing that $G_n$ is orderable.
\begin{proposition} 
The group $G_n$ is orderable.
\end{proposition}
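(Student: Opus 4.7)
The plan is to apply \cref{prop:semiorderable} twice: first to order $M_n = P_n \rtimes A$, then to lift that order to $G_n = M_n \rtimes \Z$. Since $A$ inherits an order from $\Q$ and $\Z$ has its standard order, the real work is to put an order on $P_n$ under which the $A$-conjugation action is order-preserving, and then check that the resulting order on $M_n$ is preserved by the $\Z$-conjugation action.

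For the order on $P_n$, I would order each summand $H_z^i = \dsum_{x\in X} H_{z,x}^i$ by the $\Q$-linear embedding
\[ \iota_{i,z}\colon H_z^i \to \R,\qquad h_{z,x}^{i,r}\mapsto r\,p_i^x,\]
pulled back from the usual order on $\R$. The linear independence of $\{p_i^x\mid x\in X\}$ over $\Q$ follows because any finite subfamily lies in some extension $\Q(p_i^{1/2^K})$, which has $\Q$-basis $\{p_i^{m/2^K}\mid 0\leq m< 2^K\}$ since the polynomial $T^{2^K}-p_i$ is Eisenstein at $p_i$. A direct calculation using \eqref{eq:gpactionHi} gives $\iota_{i,z}(\lambda^{-\alpha}h_{z,x}^{i,r}\lambda^\alpha)= p_i^{\alpha 2^z}\cdot \iota_{i,z}(h_{z,x}^{i,r})$, so $\lambda^\alpha$ acts on $H_z^i$ as scaling by the positive real $p_i^{\alpha 2^z}$, which preserves the induced order. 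I would then order $P_n = \dsum_{i,z} H_z^i$ lexicographically with respect to the usual orderings on $\{1,\dots,n\}$ and $\Z$. Since the $A$-action preserves the decomposition by fixing each pair $(i,z)$ and is order-preserving on each piece, the lex order on $P_n$ is $A$-invariant, and \cref{prop:semiorderable} produces a bi-invariant order $<_M$ on $M_n$.

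For extending to $G_n$, I would verify that the $\Z$-action preserves $<_M$. By \eqref{eq:gpactionZA}, the action on $A$ is multiplication by the positive dyadic $2^{-\beta}$, hence order-preserving. By \eqref{eq:gpactionZHi}, the action on $P_n$ sends $h_{z,x}^{i,r}$ to $h_{z+\beta,x}^{i,r}$; under $\iota_{i,z}$ and $\iota_{i,z+\beta}$ these elements have the same image in $\R$, and the usual order on $\Z$ is shift-invariant, so the lex order on $P_n$ is preserved. Because the $\Z$-action respects the decomposition $M_n \cong P_n \rtimes A$, the recipe in \cref{prop:semiorderable} yields a bi-invariant order on $G_n$.

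The step most at risk of being overlooked is the linear-independence argument that makes $\iota_{i,z}$ a genuine embedding. Once that is in place, everything reduces to bookkeeping: each conjugation collapses to either scaling by a positive real or an order-preserving index shift, and \cref{prop:semiorderable} does the rest.
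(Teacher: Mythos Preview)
Your proposal is correct and follows essentially the same approach as the paper: order each $H_z^i$ Archimedeanly via the real embedding $h_{z,x}^{i,r}\mapsto r p_i^x$, lex-order $P_n$ over the index set, and apply \cref{prop:semiorderable} twice. The only differences are cosmetic---you supply the Eisenstein argument for linear independence where the paper defers to an appendix, and you lex-order $P_n$ directly over $(i,z)$ rather than first over $z$ within each $H^i$ and then over $i$---but the resulting orders and the verifications are the same.
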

\begin{proof}
Consider the following order on $H_z^i$. Given an element $h\in H_z^i$, we can express it as $h_{z,x_1}^{i,r_1}\cdots h_{z,x_m}^{i,r_m}$ with $x_j\in X$ and $r_j\in\Q$. Define 
  \[ 1<_{H_z^i} h \text{ if and only if } 0<_{\R}r_1p_i^{x_1}+ \cdots+r_m p_i^{x_m} \]
where $<_\R$ denotes the usual order of $\R$ and $p_i$ is the prime fixed from earlier.
(See \cite[Appendix~A.2]{kazmi2023} for a proof of the fact that the set $\{p^x\mid x\in X\}$ is a linearly independent subset of $\R$ regarded as a $\Q$-vector space.)
The order $<_{H_z^i}$ is Archimedean and makes $H_z^i$ an Archimedean ordered group. 

We can next define an order on $H^i$. For a nonidentity element $h_{z_1}\cdots h_{z_m} \in H^i$ with $h_{z_j}\in H_{z_j}^i$ and $z_1< \cdots<z_m$, define 
\[ 1<_{H^i} h_{z_1}\cdots h_{z_m}  \text{ if and only if }  1 <_{H_{z_1}^i} h_{z_1}. \]
With an ordering of each $H^i$ fixed, we can order $P_n=H^1\times \cdots\times H^n$ lexicographically. 
Since $A$ is a subgroup of $\Q$ and the rationals can only be ordered in one of two ways as a group, we can simply fix one of these two possible orders for $A$. Likewise fix an order on $\Z$. 

It can be verified that the group action described in \eqref{eq:gpactionHi} is order-preserving with respect to the order $<_{H^i}$ on $H^i$.
So in turn, the conjugation action of $A$ upon $P_n$ preserves the given lexicographic ordering of $P_n$.
Therefore, by \cref{prop:semiorderable}, $M_n=P_n\rtimes A$ is orderable with respect to the reverse lexicographic ordering.
Lastly, it can be checked that the conjugation actions described in \eqref{eq:gpactionZHi} and \eqref{eq:gpactionZA} preserve the reverse lexicographic ordering of $M_n$.
Thus once again, by \cref{prop:semiorderable}, we can define a reverse lexicographic order on $G_n=M_n\rtimes \Z$. Hence, we can finally conclude that $G_n$ is orderable.
\end{proof}

Our next step is showing that the subgroups $H_z^i$ inherit unique orders up to duals from $G_n$. Note that two linear orders are said to be dual if they are reverse of each other.
\begin{lemma}
\label{lem:HizOrder}
In any order of $G_n$, the order of each subgroup $H_z^i$ is Archimedean and unique up to duals. 
\end{lemma}
\begin{proof}
Fix an ordering $<_G$ of $G_n$ and fix $z\in \Z$. Suppose $x_1,x_2\in X$ with $x_1<x_2$. Then $0<x_2-x_1<1$ and so we may write $x_2-x_1= m/2^k$ with $m<2^k$ and $m,k\in\Z^+$. Let $\alpha= m/2^{k+z}\in A$. Observe that
\begin{gather}
\lambda^{-2^k\alpha }h_{z,x_1}^i \lambda^{2^k\alpha}=h_{z,x_1}^{i,p_i^m} \label{eq:identityA} 
\shortintertext{and}
\lambda^{-\alpha}h_{z,x_1}^i \lambda^{\alpha} = h_{z,x_2}^i.  \label{eq:identityB} 
\end{gather}
Suppose on the contrary that $H_z^i$ is not Archimedean. Let $\ll$ denote the induced linear order on the Archimedean classes under $<_G$.
Without loss of generality, assume $h^i_{z,x_1}\ll h^i_{z,x_2}$. If we conjugate both sides by the element $\lambda^\alpha$ from \eqref{eq:identityB} we get
  \[h_{z,x_2}^i\ll \lambda^{-2\alpha}h_{z,x_1}^i \lambda^{2\alpha}.\] 
Since the relation $\ll$ is transitive, we can conclude that 
  \[h^i_{z,x_1}\ll \lambda^{-2\alpha}h^i_{z,x_1}\lambda^{2\alpha}.\]
If we continue conjugating by $\lambda^\alpha$, we will arrive at the relation
$h^i_{z,x_1}\ll \lambda^{-2^k\alpha}h^i_{z,x_1}\lambda^{2^k\alpha}$. But according to \eqref{eq:identityA}, in fact, $h^i_{z,x_1} \ll h_{z,x_1}^{i,p_i^m}$. But this is a contradiction since $\Q$ only has two orders and both of those orders are Archimedean. 
Thus it must be the case that $h^i_{z,x_1}$ and $h^i_{z,x_2}$ are Archimedean equivalent. Since $x_1$ and $x_2$ were arbitrary elements of $X$, it follows that $H^i_z$ is an Archimedean ordered group with respect to any order it inherits from $G_n$.

Next, we want to show that any order of $G_n$ restricted to $H^i_z$ is unique up to duals.
Let $<_{H^i_z}$ be the restriction of $<_G$ to $H^i_z$. 
By above $(H^i_z,<_{H^i_z})$ is Archimedean so it is order-isomorphic to a subgroup of the real numbers under their usual ordering. Let $\phi\colon H^i_z\to \R$ be the isomorphism in question, and let $L=\phi(H^i_z)$. 
Suppose $h^i_{z,0}$ is positive under $<_{H^i_z}$. We can further assume that $\phi( h^i_{z,0} )=1\in \R$. Fix $x\in X$ with $x>0$. Then if $x=\frac{m}{2^k}$, set $\alpha=\frac{m}{2^{k+z}}$.
By \eqref{eq:identityA} and \eqref{eq:identityB},
\begin{gather}
\lambda^{-2^k\alpha }h^i_{z,0}\lambda^{2^k\alpha}=h_{z,0}^{i,p_i^m}\label{eq:identityC}
\shortintertext{and}
\lambda^{-\alpha}h^i_{z,0}\lambda^{\alpha} = h^i_{z,x}.\label{eq:identityD} 
\end{gather}
Now \eqref{eq:identityD} is an order-preserving automorphism of $H^i_z$, so this gives an automorphism of $L$ which is determined by multiplication by a positive real number, say $r\in \R$. Now \eqref{eq:identityC} says $r^{2^{k}}=p_i^m$ and this implies that $r=p_i^x$. By \eqref{eq:identityD}, $\phi(h^i_{z,x})=p_i^x$. 
Therefore for all $y\in X$ and $s\in \Q$, we have $\phi( h_{z,y}^{i,s} )=s p_i^y$ 
(with $p_i^y$ always taken to be a positive real number).
That is all to say, the choice of a sign for $h^i_{z,0}$ completely determines $\phi(H^i_z)$.
So to compare elements in $H^i_z$, we can instead compare elements in $\phi(H^i_z)$. This means there is exactly one order possible on $H^i_z$ if we choose that $h^i_{z,0}$ is positive, while its dual occurs if we impose that $h^i_{z,0}$ is negative.
This shows that the order on $H^i_z$ is unique up to duals and completes the proof.
\end{proof}

To sum up the previous proof and for future convenience, we restate the unique order on $H^i_z$ that arises from the restriction of an order on $G_n$ (assuming $h^i_{z,0}$ is positive). Let $h\in H^i_z$. We can write $h=h_{z,x_1}^{i,r_1}\cdots h_{z,x_m}^{i,r_m}$ with $x_j\in X$ and $r_j\in \Q$. 
Define 
\begin{gather}\label{eq:Hizorder}
h \text{ is positive if and only if } 0<_{\R} r_1p_i^{x_1}+\cdots+r_m p_i^{x_m} 
\end{gather}
where $<_\R$ denotes the usual ordering of $\R$.  

The next two lemmas describe the relationships among the Archimedean classes of $G_n$ and how these classes are ordered.
\begin{lemma}\label{lem:Hiarchclasses}
In any order of $G_n$, for all $1\leq i \leq n$, either
\begin{gather*}
1\ll \cdots \ll  h^{i}_{z-1,0}\ll h^{i}_{z,0} \ll h^{i}_{z+1,0} \ll\cdots\ll \lambda\ll\zeta
\shortintertext{or}
1\ll \cdots \ll  h^{i}_{z+1,0}\ll h^{i}_{z,0} \ll h^{i}_{z-1,0} \ll\cdots\ll \lambda \ll\zeta
\end{gather*} holds.
\end{lemma}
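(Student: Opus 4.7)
My plan is to build the chain in three stages, extracting the needed Archimedean relations from the group actions \eqref{eq:gpactionHi}, \eqref{eq:gpactionZHi}, \eqref{eq:gpactionZA} via \cref{prop:prelimresultsA,prop:prelimresultsB}. Stage one gives the top inequality $\abs\lambda\ll\abs\zeta$; stage two gives $\abs{h^i_{z,0}}\ll\abs\lambda$ for every $i,z$; stage three produces the chain among the $\abs{h^i_{z,0}}$ by first showing pairwise Archimedean inequivalence and then applying \cref{prop:prelimresultsB}.

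Stage one is immediate: \eqref{eq:gpactionZA} gives $\zeta^{-1}\lambda^2\zeta=\lambda$, so taking $y=\lambda$, $k=2$, $l=1$ in \cref{prop:prelimresultsA} yields $\abs\lambda\ll\abs\zeta$. For stage two, I put $x=0$ and $\alpha=1/2^z$ in \eqref{eq:gpactionHi} (so $m=1$) to obtain
\[ \lambda^{-1/2^z}\,h^i_{z,0}\,\lambda^{1/2^z}=(h^i_{z,0})^{p_i}. \]
Since $p_i\neq 1$, \cref{prop:prelimresultsA} yields $\abs{h^i_{z,0}}\ll\abs{\lambda^{1/2^z}}$; and since any order on the dyadic rationals $A$ is Archimedean, $\abs{\lambda^{1/2^z}}\sim\abs\lambda$, hence $\abs{h^i_{z,0}}\ll\abs\lambda$.

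For stage three, the crucial trick is to conjugate different $h^i_{z,0}$ by the \emph{same} element of $A$. Given $z<z'$ and $\alpha=1/2^z$, \eqref{eq:gpactionHi} applied twice yields
\begin{align*}
  \lambda^{-1/2^z}\,h^i_{z,0}\,\lambda^{1/2^z} &= (h^i_{z,0})^{p_i}, \\
  \lambda^{-1/2^z}\,h^i_{z',0}\,\lambda^{1/2^z} &= (h^i_{z',0})^{p_i^{2^{z'-z}}},
\end{align*}
since $2^{z'-z}$ is a positive integer. If $\abs{h^i_{z,0}}\sim\abs{h^i_{z',0}}$, then \cref{prop:prelimresultsA} would force $1\cdot p_i^{2^{z'-z}}=1\cdot p_i$, giving $z=z'$, a contradiction. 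With pairwise inequivalence in hand, I apply \cref{prop:prelimresultsB} with $x=\zeta$, $y_z=h^i_{z,0}$ and the identity $\zeta^{-1}h^i_{z,0}\zeta=h^i_{z+1,0}$ from \eqref{eq:gpactionZHi}; it delivers either an increasing or decreasing Archimedean chain of the $\abs{h^i_{z,0}}$ bounded above by $\abs\zeta$, into which the first two stages allow me to insert $\abs\lambda$ just below $\abs\zeta$, yielding the stated conclusion.

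The main obstacle is stage three: without the idea of conjugating several indices by a common element of $A$, \cref{prop:prelimresultsA} cannot separate the classes at all; once one sees that the exponent mismatch $p_i$ versus $p_i^{2^{z'-z}}$ falls out of the same conjugation, the argument reduces to a short calculation.
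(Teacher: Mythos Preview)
Your proof is correct and follows essentially the same three-stage approach as the paper: use \cref{prop:prelimresultsA} on the relations \eqref{eq:gpactionZA} and \eqref{eq:gpactionHi} to get $\lambda\ll\zeta$ and $h^i_{z,0}\ll\lambda$, then establish $h^i_{z,0}\nsim h^i_{z',0}$ and invoke \cref{prop:prelimresultsB} via \eqref{eq:gpactionZHi}. Your stage three is in fact a little cleaner than the paper's: the paper treats the cases $z\ge 0$, $z<0$, and the boundary pair $(-1,0)$ separately (conjugating by $\lambda$, $\lambda^{2^{k+1}}$, and $\lambda^{2}$ respectively), whereas your choice $\alpha=1/2^{\min(z,z')}$ gives the exponent mismatch $p_i$ versus $p_i^{2^{z'-z}}$ uniformly for all pairs $z<z'$.
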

\begin{proof}
Fix $1\leq i\leq n$.  
We first want to show that $h^i_{z,0}\nsim h^i_{z+1,0}$ for all $z\in\Z$. 
To simplify notation, let $p=p_i$. 
Suppose $z\geq 0$. Then
\begin{gather*} 
\lambda^{-1} h^i_{z,0}\lambda= h_{z,0}^{i,p^{2^{z}}} 
  \shortintertext{and}
\lambda^{-1} h^i_{z+1,0}\lambda= h_{z+1,0}^{i,p^{2^{z+1}}}.
\end{gather*}
Since $p^{2^z} \neq p^{2^{z+1}}$, by \cref{prop:prelimresultsA} it follows that $h^i_{z,0}\nsim h^i_{z+1,0}$.
Now, suppose $z<0$ and let $z=-k$. Then $\lambda^{-2^{k+1}} h^i_{z,0} \lambda^{2^{k+1}} = h_{z,0}^{i,p^2}$ and $\lambda^{-2^{k+1}} h^i_{z-1,0} \lambda^{2^{k+1}} = h_{z-1,0}^{i,p}$. 
Again, by \cref{prop:prelimresultsA}, we have $h^i_{z,0}\nsim h^i_{z-1,0}$. 
We are just left to  show that $h^i_{-1,0}\nsim h^i_{0,0}$. Observe that 
$\lambda^{-2} h^i_{-1,0}\lambda^2 = h_{-1,0}^{i,p}$ and $\lambda^{-2} h^i_{0,0}\lambda^2 = h_{0,0}^{i,p^2}$. So $h^i_{-1,0}\nsim h^i_{0,0}$.
Therefore $h^i_{z,0}\nsim h^i_{z+1,0}$ for all $z\in \Z$.
Using this fact along with relation  $\zeta^{-1}h^i_{z,0}\zeta=h^i_{z+1,0}$ we can apply \cref{prop:prelimresultsB} to get that either 
\begin{gather*}
\cdots \ll  h^i_{z-1,0}\ll h^i_{z,0} \ll h^i_{z+1,0} \ll  \cdots\ll \zeta
\shortintertext{or}
\cdots \ll  h^i_{z+1,0}\ll h^i_{z,0} \ll h^i_{z-1,0} \ll  \cdots \ll\zeta.
\end{gather*}

Next, by the relation $\lambda^{-2^{-z}}h^i_{z,0}\lambda^{2^{-z}} = h_{z,0}^{i,p}$ and \cref{prop:prelimresultsA}, we can conclude that $h^i_{z,0}\ll \lambda^{2^{-z}}$. But since $\lambda\sim \lambda^{2^{-z}}$, in fact, $h^i_{z,0} \ll \lambda$ for all $z$.  We can also conclude that $\lambda\ll\zeta$ via $\zeta\lambda\zeta^{-1}=\lambda^2$. Putting everything together, it is the case that either
\begin{gather*}
\cdots \ll  h^i_{z-1,0}\ll h^i_{z,0} \ll h^i_{z+1,0} \ll  \cdots\ll \lambda\ll\zeta
\shortintertext{or}
\cdots \ll  h^i_{z+1,0}\ll h^i_{z,0} \ll h^i_{z-1,0} \ll  \cdots \ll \lambda \ll\zeta. \qedhere
\end{gather*}
\end{proof}

\begin{lemma} \label{lem:hihjnotequiv}
For all $i\neq j$, the elements $h_{u,0}^{i}$ and $h_{v,0}^{j}$ are not Archimedean equivalent for any $u,v\in\Z$.
\end{lemma}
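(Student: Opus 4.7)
I would prove this by contradiction: suppose $h^i_{u,0} \sim h^j_{v,0}$, and apply \cref{prop:prelimresultsA} to a single conjugating element $x = \lambda^\alpha$ that sends each of $h^i_{u,0}$ and $h^j_{v,0}$ back to an integer power of itself. The constraint $k_1 l_2 = k_2 l_1$ forced by that proposition will then become an impossible equation between the distinct primes $p_i$ and $p_j$.

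Without loss of generality assume $u \leq v$, and take $\alpha = 2^{-u}$, so that $\alpha 2^u = 1$ and $\alpha 2^v = 2^{v-u}$ are both nonnegative integers. The group action in \eqref{eq:gpactionHi} (with $m = 1$ and $m = 2^{v-u}$, respectively) then gives
\begin{align*}
  \lambda^{-\alpha} h^i_{u,0} \lambda^{\alpha} &= h^{i,p_i}_{u,0} = (h^i_{u,0})^{p_i}, \\
  \lambda^{-\alpha} h^j_{v,0} \lambda^{\alpha} &= h^{j,p_j^{2^{v-u}}}_{v,0} = (h^j_{v,0})^{p_j^{2^{v-u}}}.
\end{align*}

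Setting $y_1 = h^i_{u,0}$, $y_2 = h^j_{v,0}$, $(k_1,l_1) = (1, p_i)$, and $(k_2,l_2) = (1, p_j^{2^{v-u}})$, we have $k_1 \neq l_1$ since $p_i \geq 2$, so the hypothesis of \cref{prop:prelimresultsA} is satisfied; its conclusion $\abs{y_1} \ll \abs{x}$ is consistent with \cref{lem:Hiarchclasses}, and the assumed $y_1 \sim y_2$ forces $k_1 l_2 = k_2 l_1$, that is, $p_j^{2^{v-u}} = p_i$. This is impossible because $p_i$ and $p_j$ are distinct primes: when $u = v$ the equation reads $p_j = p_i$, and when $u < v$ it demands that the prime $p_i$ equal a power of $p_j$ with exponent $2^{v-u} \geq 2$. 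The only real obstacle is bookkeeping the floor $m$ in \eqref{eq:gpactionHi} simultaneously for both elements; the choice $\alpha = 2^{-\min(u,v)}$ is exactly what makes both $\alpha 2^u$ and $\alpha 2^v$ integers, returning both conjugates to the $x$-coordinate $0$ with only the rational weight changed.
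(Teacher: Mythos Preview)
Your proof is correct and follows essentially the same approach as the paper: both apply \cref{prop:prelimresultsA} with the conjugating element $\lambda^{2^{-\min(u,v)}}$ and derive an impossible prime-power equation. The only difference is cosmetic---the paper splits into the cases $u=v$ and $u\neq v$ explicitly, whereas you handle both at once.
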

\begin{proof}
  Suppose $u=v$. Note that $\lambda^{-2^{-u}} h^i_{u,0}\lambda^{2^{-u}}=h_{u,0}^{i,p_i}$ and $\lambda^{-2^{-v}} h^j_{v,0}\lambda^{2^{-v}}= h_{v,0}^{j,p_j}$. Since $p_i\neq p_j$, we must have $h^i_{u,0}\nsim h^j_{v,0}$  by \cref{prop:prelimresultsA}. 
  Now suppose $u\neq v$. We can fix an integer $k\geq 1$ such that either $k=u-v$ or $k=v-u$. Without loss of generality, say $k=u-v$. Then $\lambda^{-2^{-u+k}} h^i_{u,0}\lambda^{2^{-u+k}}=h_{u,0}^{i,p_i^{2^k}}$ and $\lambda^{-2^{-v}} h^j_{v,0}\lambda^{2^{-v}}=h_{v,0}^{j,p_j}$.
Since $-u+k=-v$ and $p_i^{2^k}\neq p_j$, it follows that $h^i_{u,0}\nsim h^j_{v,0}$ again from \cref{prop:prelimresultsA}.
\end{proof}
Using the above lemmas, we can deduce the following result.
\begin{lemma}\label{lem:allarchclasses}
The Archimedean classes of $G_n$ with respect to any order are 
\[ \{[1],[\lambda],[\zeta] \}\cup \{[h_{z,0}^{1}] \mid z\in\Z\}\cup \cdots \cup \{[h_{z,0}^{n}] \mid z\in\Z\}. \]
\end{lemma}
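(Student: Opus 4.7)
The strategy is to show that every nonidentity element of $G_n$ is Archimedean equivalent to one of the elements $\lambda$, $\zeta$, or some $h^i_{z,0}$, via a three-case reduction on the normal form in the iterated semidirect product. The reverse inclusion (that each listed class is genuinely Archimedean, with the claimed classes distinct) is already supplied by \cref{lem:HizOrder,lem:Hiarchclasses,lem:hihjnotequiv}.

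First, fix an ordering of $G_n$ and write an arbitrary nonidentity element as $g=p\lambda^\alpha\zeta^\beta$ with $p\in P_n$, $\alpha\in A$, $\beta\in\Z$, using the structure $G_n=(P_n\rtimes A)\rtimes\Z$. I would split into three cases according to which of $\beta,\alpha,p$ first becomes nonzero. In the case $\beta\ne 0$, I would first note that $\zeta^\beta\sim\zeta$ (because $\Z$ is Archimedean) and then argue $p\lambda^\alpha\ll\zeta$; the latter follows because by \cref{lem:Hiarchclasses} every $h^i_{z,0}$ and also $\lambda$ are Archimedean less than $\zeta$, and repeated use of property (iii) ($a,b\ll g$ implies $ab\ll g$) together with \cref{lem:HizOrder} (so each factor of $p$ lying in some $H^i_z$ is Archimedean equivalent to $h^i_{z,0}$, hence $\ll\zeta$) gives $p\ll\zeta$. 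Then property (iv) yields $g=p\lambda^\alpha\zeta^\beta\sim\zeta^\beta\sim\zeta$. The case $\beta=0$, $\alpha\ne 0$ is analogous: $\lambda^\alpha\sim\lambda$ since $A\leq\Q$ is Archimedean, $p\ll\lambda$ by the same argument, so $g\sim\lambda$.

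The interesting case is $\beta=0$ and $\alpha=0$, where $g=p\in P_n$. Since $P_n=H^1\times\cdots\times H^n$ and each $H^i=\bigoplus_{z\in\Z}H^i_z$ is a direct sum, $p$ is a product of finitely many nontrivial components, one in each of finitely many distinct subgroups $H^{i_k}_{z_k}$. Each such component, being a nonidentity element of the Archimedean group $H^{i_k}_{z_k}$, is Archimedean equivalent to $h^{i_k}_{z_k,0}$ by \cref{lem:HizOrder}. By \cref{lem:Hiarchclasses,lem:hihjnotequiv}, the Archimedean classes $[h^{i_k}_{z_k,0}]$ for distinct pairs $(i_k,z_k)$ are pairwise distinct and linearly ordered under $\ull$, so there is a unique maximum among them, say $[h^{i^*}_{z^*,0}]$. \Cref{prop:nprodclass} then yields $p\sim h^{i^*}_{z^*,0}$.

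Combining the three cases shows every nonidentity element is Archimedean equivalent to one of the claimed representatives, and since the claimed classes are themselves pairwise distinct (nonequivalent) by the earlier lemmas, the enumeration is exact. The only mildly subtle step is Case C, where one must first collapse each $H^i_z$-component using that each $H^i_z$ is Archimedean, before invoking \cref{prop:nprodclass}; the other two cases are essentially a clean application of properties (iii) and (iv) combined with the chain $h^i_{z,0}\ll\lambda\ll\zeta$ from \cref{lem:Hiarchclasses}.
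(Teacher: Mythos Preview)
Your proposal is correct and follows essentially the same approach as the paper: write $g$ in its normal form and identify the factor of largest Archimedean class. The only difference is cosmetic---the paper applies \cref{prop:nprodclass} once to the entire list of factors $h^i_{z_k},\lambda^a,\zeta^b$, whereas you peel off the $\zeta$- and $\lambda$-cases first using properties (iii) and (iv) before invoking \cref{prop:nprodclass} on the $P_n$ part; both routes rely on the same ingredients (\cref{lem:HizOrder,lem:Hiarchclasses,lem:hihjnotequiv}).
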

\begin{proof}
This follows since we can express each element of $G_n$ in a unique way. Suppose $g\in G_n$ is a nonidentity element. 
Then we can express $g$ uniquely in the form $h^1\cdots h^n \lambda^a\zeta^b$ where $h^i\in H^i$, $a\in A$ and $b\in \Z$. 
Furthermore, we can express each $h^i$ uniquely as $h^i_{z_1}\cdots h^i_{z_v}$  with $h^i_{z_j}\in H^i_{z_j}$ and $z_1<\cdots< z_v$.  
Now by \cref{prop:nprodclass}, the Archimedean class of $g$ will be whatever is the largest Archimedean class among the classes determined by the elements $h^i_{z_j}$, $\lambda^a$ and $\zeta^b$.
(Note that $\lambda^a\sim \lambda$ and $\zeta^b\sim \zeta$ whenever $a,b\neq 0$.)
\end{proof}

We state a definition before moving on. 
\begin{definition} \label{def:archmixed}
Let $A$ and $B$ be two subgroups of an ordered group. 
\begin{enumerate}
\item We will write $A\ll B$ to denote $a\ll b$ for all $a\in A\sm \{1\}$ and $b\in B\sm \{1\}$.
\item We will say that $A$ and $B$ are \textit{mixed} if $A\nll B$ and $B \nll A$.
\end{enumerate}
\end{definition}

\section{Invariants of orderings of \texorpdfstring{$G_n$}{Gn}} 
In this section we discuss a useful way to be able to describe all the orderings of $G_n$. 
Under any fixed order of $G_n$, by \cref{lem:Hiarchclasses}, we know that the Archimedean classes of each $H^i$ can only be ordered in exactly one of two ways. With this in mind we make the following definition.
\begin{definition}
Fix an ordering $<$ of $G_n$.
\begin{enumerate}
\item We will say that the direction of $H^i$ is \textit{positive} if it is the case that
\[ \cdots \ll h^i_{z-2,0}\ll  h^{i}_{z-1,0}\ll h^{i}_{z,0} \ll h^{i}_{z+1,0} \ll h^i_{z+2,0}\ll  \cdots. \]
\item We will say that the direction of $H^i$ is \textit{negative} if it is the case that 
\[  \cdots \ll h^i_{z+2,0}\ll h^{i}_{z+1,0}\ll h^{i}_{z,0} \ll h^{i}_{z-1,0} \ll h^i_{z-2,0}\ll \cdots.\]
\end{enumerate}
\end{definition}

We now introduce an auxiliary relation that will help us understand the orderings of $G_n$. Fix an ordering $<$ of $G_n$. Let $\bar n$ denote the set $\{1,\ldots,n\}$. 
Define $\usim_<$ on $\bar n$ by $i\sim_<j$ if and only if either $i=j$, or else $H^i$ and $H^j$ are mixed.
\begin{lemma} \label{lem:samedir}
If $i\sim_<j$, then $H^i$ and $H^j$ have the same direction.  That is, the direction of both is either positive or negative.
\end{lemma}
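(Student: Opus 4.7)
The plan is to argue by contradiction. The case $i=j$ is immediate, so suppose $i\neq j$, that $H^i$ and $H^j$ are mixed, and, for contradiction, that their directions disagree; without loss of generality I would take $H^i$ to have positive direction and $H^j$ to have negative direction (the mirror case follows by swapping roles). The target is to derive $H^i\ll H^j$, which contradicts mixedness.

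My first step is to extract a single seed inequality of the form $[h^i_{z_0,0}]\ll [h^j_{w_0,0}]$ for some $z_0,w_0\in\Z$. From $H^j\nll H^i$ one picks nonidentity witnesses $a\in H^j$ and $b\in H^i$ with $a\nll b$. Combining \cref{lem:HizOrder} with \cref{prop:nprodclass}, the Archimedean class of any nonidentity element of $H^i$ has the form $[h^i_{z,0}]$ for some $z$ (and similarly for $H^j$); by \cref{lem:hihjnotequiv} these two families of classes are disjoint, so $a\nsim b$. Since $\ll$ linearly orders the Archimedean classes, $a\nsim b$ together with $a\nll b$ forces $b\ll a$, which after identifying classes is precisely the seed.

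The second step is to propagate the seed using the $\zeta$-action. Conjugation in any ordered group preserves $\ll$ (an immediate consequence of bi-invariance, via $\abs{xgx^{-1}} = x\abs{g}x^{-1}$), so conjugating by $\zeta^k$ together with \eqref{eq:gpactionZHi} gives
\[ [h^i_{z_0+k,0}]\ll [h^j_{w_0+k,0}] \qquad\text{for every } k\in\Z. \]
The final step is to sweep all index pairs: given arbitrary $z',w'\in\Z$, I would choose $k$ so large that $z'<z_0+k$ and $w'<w_0+k$ both hold. The positive direction of $H^i$ then yields $[h^i_{z',0}]\ll [h^i_{z_0+k,0}]$ and the negative direction of $H^j$ yields $[h^j_{w_0+k,0}]\ll [h^j_{w',0}]$; chaining these with the displayed seed gives $[h^i_{z',0}]\ll [h^j_{w',0}]$ for all $z',w'\in\Z$. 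Since every Archimedean class arising in $H^i$ (respectively $H^j$) is of this form, this says $H^i\ll H^j$, contradicting $H^j\nll H^i$. The only mildly delicate point is producing the seed; once it is in hand, the $\zeta$-shift combined with the opposite directional assumptions (which push the two chains of classes in opposite ways) does all the work.
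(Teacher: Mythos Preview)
Your argument is correct and cleaner for this lemma in isolation, though it differs from the paper's route. One small slip: in your last line you write ``contradicting $H^j\nll H^i$,'' but $H^i\ll H^j$ is perfectly consistent with $H^j\nll H^i$; what it actually contradicts is the other half of mixedness, $H^i\nll H^j$. Fix that and the proof stands.

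The paper does not argue by contradiction. Instead it starts from both witnesses of mixedness, $h^i_{u,0}\ll h^j_{v,0}$ and $h^j_{t,0}\ll h^i_{s,0}$, conjugates the second by $\zeta^{v-t}$ to sandwich $h^j_{v,0}$ between two classes of $H^i$, then refines that sandwich to $h^i_{m,0}\ll h^j_{v,0}\ll h^i_{m\pm 1,0}$ (the sign depending on the direction of $H^i$). Conjugating this by all powers of $\zeta$ produces an explicit interleaving of the two chains of classes, from which one reads off directly that $H^j$ must march the same way as $H^i$. Your approach buys brevity: a single seed inequality plus the opposite-direction hypothesis already forces $H^i\ll H^j$, and you never need to locate the sandwich. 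The paper's approach buys more: the interleaving pattern it extracts is exactly the content of \cref{lem:HiHjmixed}, so its proof of \cref{lem:samedir} simultaneously proves the next lemma, whereas your argument would need a separate pass to recover that structure.
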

\begin{proof}
Suppose $i\neq j$. Then $H^i$ and $H^j$ are mixed and there exists integers $s,t,u,v$ such that 
$h_{u,0}^{i}\ll h_{v,0}^{j}$ and $h_{t,0}^{j}\ll h_{s,0}^{i}$.
Conjugate the relation 
$h_{t,0}^{j}\ll h_{s,0}^{i}$
by $\zeta^{v-t}$ to get $h^{j}_{v,0}\ll h^{i}_{s+v-t,0}$ and this implies $h^{i}_{u,0}\ll h^{j}_{v,0} \ll h^{i}_{s+v-t,0}$.

If the direction of $H^i$ is positive, then by \cref{lem:Hiarchclasses}, we can fix a largest possible integer $m\geq u$ such that $h^i_{m,0}\ll h^j_{v,0}$ but $h^i_{m+1,0} \nll h^j_{v,0}$.  Then $h^{i}_{m,0}\ll h^{j}_{v,0}\ll h^{i}_{m+1,0}$.
Similarly, if the direction of $H^i$ is negative, then by \cref{lem:Hiarchclasses}, we can fix a smallest possible integer $m\leq u$ such that $h^i_{m,0}\ll h^j_{v,0}$ but $h^i_{m-1,0} \nll h^j_{v,0}$.  Then $h^{i}_{m,0}\ll h^{j}_{v,0}\ll h^{i}_{m-1,0}$.
In any case, we can fix an integer $m$ such that either $h^{i}_{m,0}\ll h^{j}_{v,0}\ll h^{i}_{m+1,0}$ or $h^{i}_{m,0}\ll h^{j}_{v,0}\ll h^{i}_{m-1,0}$ depending on how the Archimedean classes of $H^i$ are ordered. 
If we conjugate these by $\zeta^{-m}$ we get that either
\begin{gather*}
h^{i}_{0,0} \ll h^{j}_{l,0} \ll h^{i}_{1,0}
\shortintertext{or}
h^{i}_{0,0} \ll h^{j}_{l,0} \ll h^{i}_{-1,0}
\end{gather*} where $l=v-m$. 
By conjugating the above two relations by various integral powers of $\zeta$ we will see that either
\begin{gather*}
\cdots \ll h^{i}_{0,0}\ll h^{j}_{l,0}\ll h^{i}_{1,0}\ll h^{j}_{l+1,0}\ll h^{i}_{2,0}\ll \cdots 
\shortintertext{or}
\cdots \ll h^{i}_{0,0}\ll h^{j}_{l,0}\ll h^{i}_{-1,0}\ll h^{j}_{l-1,0}\ll h^{i}_{-2,0}\ll \cdots 
\end{gather*}
holds. 
Thus we see that the Archimedean classes of $H^i$ and $H^j$ have the same direction.
\end{proof}

The previous proof also shows the following lemma stating how the Archimedean classes of two different $H^i$ and $H^j$ are ordered when $H^i$ and $H^j$ are mixed.
\begin{lemma} \label{lem:HiHjmixed}
Suppose $i\neq j$, and $H^i$ and $H^j$ are mixed. Then there exists an integer $u\in \Z$ such that either
\begin{gather*}
\cdots \ll h^{i}_{-1,0}\ll h^{j}_{u-1,0}\ll h^{i}_{0,0}\ll h^{j}_{u,0}\ll h^{i}_{1,0}\ll h^j_{u+1,0}\ll \cdots 
\shortintertext{or}
\cdots \ll h^{i}_{1,0}\ll h^{j}_{u+1,0}\ll h^{i}_{0,0}\ll h^{j}_{u,0}\ll h^{i}_{-1,0}\ll h^j_{u-1,0}\ll \cdots 
\end{gather*}
is true.
\end{lemma}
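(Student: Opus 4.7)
The plan is to extract the conclusion directly from the proof of \cref{lem:samedir}, since nearly the entire argument has already been carried out there.

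Assume $H^i$ and $H^j$ are mixed. By \cref{lem:samedir} they share the same direction, and I will treat the positive-direction case (the negative case is symmetric). In the proof of \cref{lem:samedir} it was shown that, by conjugating an initial witness of mixing by a suitable power of $\zeta$, one produces an integer $l\in\Z$ such that
\[ h^{i}_{0,0}\ll h^{j}_{l,0}\ll h^{i}_{1,0}. \]
I would set $u=l$ and take this triple inequality as the seed of the desired chain.

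Next, I would conjugate this relation by each $\zeta^\beta$ with $\beta\in\Z$. Since conjugation is an order automorphism of $G_n$ it preserves the relation $\ll$, and by \eqref{eq:gpactionZHi} the map $g\mapsto \zeta^{-\beta}g\zeta^{\beta}$ sends $h^{i}_{z,0}\mapsto h^{i}_{z+\beta,0}$ and $h^{j}_{z,0}\mapsto h^{j}_{z+\beta,0}$. Therefore, for every $\beta\in\Z$,
\[ h^{i}_{\beta,0}\ll h^{j}_{u+\beta,0}\ll h^{i}_{\beta+1,0}. \]
Gluing these consecutive triples together for $\beta=\ldots,-1,0,1,\ldots$ (the right-hand element of the $\beta$-triple is the left-hand element of the $(\beta{+}1)$-triple) yields the desired infinite interleaved chain. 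For the negative-direction case, the analogous seed is $h^{i}_{0,0}\ll h^{j}_{u,0}\ll h^{i}_{-1,0}$, and conjugating by powers of $\zeta$ produces the reversed chain.

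There is no substantive obstacle: the lemma is essentially a repackaging of a chain already assembled inside the proof of \cref{lem:samedir}. The only care required is with the sign convention in \eqref{eq:gpactionZHi}, so that one correctly tracks how the indices in both $H^i$ and $H^j$ shift in lockstep under conjugation by $\zeta^{\beta}$.
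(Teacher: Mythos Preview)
Your proposal is correct and matches the paper's own proof, which consists of the single line ``See proof of \cref{lem:samedir}.'' You have simply made explicit the final step of that earlier proof: once the seed relation $h^{i}_{0,0}\ll h^{j}_{l,0}\ll h^{i}_{\pm 1,0}$ is in hand, conjugation by $\zeta^\beta$ shifts all $z$-indices by $\beta$ and produces the full interleaved chain.
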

\begin{proof}
See proof of \cref{lem:samedir}.
\end{proof}

\begin{lemma} 
The relation ${\sim_<}$ is an equivalence relation.
\end{lemma}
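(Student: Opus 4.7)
The plan is to verify the three axioms for an equivalence relation in turn. Reflexivity is built into the definition via the $i=j$ clause, and symmetry is immediate since the property that $H^i$ and $H^j$ are mixed is symmetric in $i$ and $j$ (both $H^i\nll H^j$ and $H^j\nll H^i$ appear in the definition). Only transitivity needs real work.

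For transitivity, I would suppose $i\sim_< j$ and $j\sim_< k$ and handle the cases in which two of the three indices coincide as trivial consequences of reflexivity. So assume $i,j,k$ are pairwise distinct; then both pairs $(H^i,H^j)$ and $(H^j,H^k)$ are mixed. By \cref{lem:samedir}, the three subgroups $H^i,H^j,H^k$ share a common direction, which I take to be positive (the negative case is entirely analogous, using the second alternative in \cref{lem:HiHjmixed}). Applying \cref{lem:HiHjmixed} to each pair then produces integers $u,v\in\Z$ such that, for every $z\in\Z$,
\[ h^i_{z,0}\ll h^j_{u+z,0}\ll h^i_{z+1,0}\quad\text{and}\quad h^j_{z,0}\ll h^k_{v+z,0}\ll h^j_{z+1,0}. \]

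The key step is to splice these chains. Specializing the second relation at $z=u$ yields $h^j_{u,0}\ll h^k_{u+v,0}\ll h^j_{u+1,0}$; combining this with $h^i_{0,0}\ll h^j_{u,0}$ from the first chain at $z=0$ and $h^j_{u+1,0}\ll h^i_{2,0}$ from the first chain at $z=1$, and invoking transitivity of $\ll$, I obtain
\[ h^i_{0,0}\ll h^k_{u+v,0}\ll h^i_{2,0}. \]
The left inequality witnesses $H^k\nll H^i$ and the right witnesses $H^i\nll H^k$, so $H^i$ and $H^k$ are mixed and $i\sim_< k$, completing transitivity.

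The only delicate point, and the sole reason \cref{lem:samedir} is needed, is the direction bookkeeping: if $H^i$ and $H^k$ had opposite directions, the interleavings produced by \cref{lem:HiHjmixed} for the pairs $(i,j)$ and $(j,k)$ would run in incompatible senses and could not be spliced to trap a single $H^k$-class strictly between two $H^i$-classes. Once the common-direction reduction is in hand, the argument is a short exercise in transitivity of $\ll$.
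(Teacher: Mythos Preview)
Your proposal is correct and follows essentially the same approach as the paper: both dispose of reflexivity and symmetry quickly, reduce transitivity to the positive-direction case via \cref{lem:samedir}, apply \cref{lem:HiHjmixed} to each mixed pair, and splice the resulting chains to trap an $H^k$-class strictly between two $H^i$-classes. The only cosmetic difference is that you parametrize the interleavings for all $z\in\Z$ and then specialize, whereas the paper writes down directly the finitely many inequalities it needs; your $u+v$ is the paper's $v$.
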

\begin{proof}
It is easy to see that ${\sim_<}$ is both reflexive and symmetric. We just need to show that $\sim_<$ is transitive.  Suppose $i\sim_< j$ and $j\sim_< k$. Suppose $H^i,H^j$ and $H^k$ all have positive direction.
We will only prove this case, the other case when all have negative direction is handled similarly. 
By \cref{lem:HiHjmixed}, we know there exists integers $u$ and $v$ such that
\begin{gather*} 
h_{0,0}^{i} \ll h_{u,0}^{j} \ll h_{1,0}^{i} \ll h_{u+1,0}^{j}\ll h_{2,0}^{i} \\
\shortintertext{and}
h_{u,0}^{j} \ll h_{v,0}^{k} \ll h_{u+1,0}^{j}. 
\end{gather*}
It follows $h^{i}_{0,0} \ll h^{j}_{u,0} \ll h^{k}_{v,0}$  and $h_{v,0}^{k} \ll h_{u+1,0}^{j} \ll h_{2,0}^{i}$. Thus $H^i$ and $H^k$ are mixed, and $i\sim_< k$.
\end{proof}

We will refer to the direction of an equivalence class $[i]$ as being \textit{positive} or \textit{negative}, by which we will mean that the direction of $H^i$ is positive or negative, respectively. 
\begin{lemma} \label{lem:welldefined}
If $i\sim_< k$ and $j\sim_< l$, then $H^i\ll H^j\text{ if and only if }H^k \ll H^l$.
\end{lemma}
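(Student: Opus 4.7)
The plan is to reduce the proof to two basic facts about the subgroup relation $\ll$ from \cref{def:archmixed}. Fact A (transitivity of $\ll$ on subgroups): if $A\ll B$ and $B\ll C$ for nontrivial subgroups $A,B,C$, then $A\ll C$, which is immediate from the transitivity of $\ll$ on individual elements. Fact B (mixed subgroups are $\ll$-incomparable): if $A$ and $B$ are mixed, then neither $A\ll B$ nor $B\ll A$, which is just the definition. A third preliminary ingredient, using the already established transitivity of $\sim_<$ together with $i\nsim_< j$, is that $k\nsim_< j$ and $k\nsim_< l$; indeed $k\sim_< j$ would give $i\sim_< k\sim_< j$, and $k\sim_< l$ would give $i\sim_< k\sim_< l\sim_< j$, each contradicting $i\nsim_< j$.

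For the forward direction I would assume $H^i\ll H^j$ and derive $H^k\ll H^l$ in two substitution steps. Step 1 replaces $i$ by $k$: if $k=i$ there is nothing to do, and otherwise $H^i$ and $H^k$ are mixed because $i\sim_< k$ with $i\neq k$. Since $k\nsim_< j$, the subgroups $H^k$ and $H^j$ are not mixed, so one of $H^k\ll H^j$ or $H^j\ll H^k$ must hold. The second alternative, combined with the hypothesis $H^i\ll H^j$ and Fact A, would force $H^i\ll H^k$, contradicting Fact B applied to the mixed pair $H^i,H^k$. Hence $H^k\ll H^j$.

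Step 2 replaces $j$ by $l$ in exactly the same way: either $l=j$, in which case we are done, or $H^j$ and $H^l$ are mixed; since $k\nsim_< l$, the subgroups $H^k$ and $H^l$ are not mixed, and the alternative $H^l\ll H^k$ combined with the previously derived $H^k\ll H^j$ would yield $H^l\ll H^j$ by Fact A, contradicting Fact B for the mixed pair $H^j,H^l$. Therefore $H^k\ll H^l$, completing the forward implication. The reverse implication follows by symmetry: the hypotheses $i\sim_< k$, $j\sim_< l$, and $i\nsim_< j$ are all preserved under swapping $(i,j)$ with $(k,l)$, so applying the same argument to the swapped data converts $H^k\ll H^l$ back into $H^i\ll H^j$. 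The only real obstacle is the bookkeeping of which pairs are mixed and which are not, plus handling the degenerate subcases $i=k$ and $j=l$; once Facts A and B are isolated, everything else is a short chain of implications.
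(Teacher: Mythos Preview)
Your proof is correct. The underlying logic matches the paper's---both arguments exploit that $\sim_<$-inequivalent indices give $\ll$-comparable subgroups, then use transitivity of $\ll$ together with the incomparability of mixed pairs to rule out the wrong alternative---but the packaging differs. The paper does a single contradiction: assuming $H^l\ll H^k$, it picks witnesses $h_{0,0}^{j}\ll h_{u,0}^{l}$ and $h_{0,0}^{k}\ll h_{v,0}^{i}$ from the mixed pairs and chains them into $h_{0,0}^{j}\ll h_{v,0}^{i}$, contradicting $H^i\ll H^j$. You instead work entirely at the subgroup level and decompose into two substitution steps ($i\to k$, then $j\to l$), each with its own mini-contradiction via your Facts A and B. Your version is slightly more modular and avoids naming individual elements, at the cost of needing the auxiliary observation $k\nsim_< j$; the paper's single chain is shorter but leaves the $i=k$ and $j=l$ degenerate cases implicit. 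Either way the content is the same.
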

\begin{proof}
  Assume $H^i\ll H^j$. This implies that $i\nsim_< j$. Since $\sim_<$ is an equivalence relation, it follows that $k\nsim_< l$. Then either $H^k\ll H^l$ or $H^l\ll H^k$. 
By way of contradiction, suppose $H^l\ll H^k$. 
Since $j\sim_< l$ and $i\sim_< k$, there exists integers $u$ and $v$ such that $h_{0,0}^{j}\ll h_{u,0}^{l}$ and $h_{0,0}^{k}\ll h_{v,0}^{i}$. 
By assumption $H^l\ll H^k$ and so $h_{0,0}^{j}\ll h_{u,0}^{l}\ll h_{0,0}^{k}\ll h_{v,0}^{i}$. But this contradicts the initial assumption $H^i\ll H^j$.
Therefore it must be that $H^k\ll H^l$. A similar argument shows that $H^k\ll H^l$ implies $H^i\ll H^j$.
\end{proof}

Observe that \cref{lem:welldefined} shows that the $\ull$ relation on the Archimedean classes induces a total order $\lesssim$ on the set of equivalence classes of $\bar n$ under $\usim_<$. Given two equivalence classes $[i]$ and $[j]$, we can define $[i]\lesssim [j]$ if and only if $[i]=[j]$ or $H^i \ll H^j$.

\begin{lemma} \label{lem:mxclasses}
Let $C$ be an equivalence class under $\sim_<$ with $\abs C=k$ and $k\geq 2$.  Let $i_0$ be the least integer of $C$.
\begin{enumroman}
\item Suppose the direction of $H^i$ is positive for all $i\in C$.  Then there is an enumeration $i_0,i_1,\ldots,i_{k-1}$ of $C$ and integers $u_1,\ldots,u_{k-1}$ such that 
\[\cdots\ll h_{0,0}^{i_0} \ll h_{u_1,0}^{i_1} \ll \cdots \ll h_{u_{k-1},0}^{i_{k-1}} \ll h_{1,0}^{i_0}\ll h_{u_1+1,0}^{i_1} \ll \cdots \ll h_{u_{k-1}+1,0}^{i_{k-1}} \ll h_{2,0}^{i_0}\ll\cdots.\]
\item Suppose the direction of $H^i$ is negative for all $i\in C$.  Then there is an enumeration $i_0,i_1,\ldots,i_{k-1}$ of $C$ and integers $u_1,\ldots,u_{k-1}$ such that 
\[\cdots\ll h_{0,0}^{i_0} \ll h_{u_1,0}^{i_1} \ll \cdots \ll h_{u_{k-1},0}^{i_{k-1}} \ll h_{-1,0}^{i_0}\ll  h_{u_1-1,0}^{i_1} \ll \cdots \ll h_{u_{k-1}-1,0}^{i_{k-1}} \ll h_{-2,0}^{i_0}\ll\cdots.\]
\end{enumroman}
\end{lemma}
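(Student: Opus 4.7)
The plan is to use \cref{lem:HiHjmixed} to single out, for each $j\in C\sm\{i_0\}$, a unique Archimedean class from $H^j$ sitting strictly between $[h^{i_0}_{0,0}]$ and $[h^{i_0}_{1,0}]$ (in the positive-direction case), and then to sort these intermediate classes using the total order $\ll$ on the Archimedean classes of $G_n$.

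I would prove part (i) first; part (ii) is entirely symmetric. By \cref{lem:samedir}, all $H^i$ with $i\in C$ share a common direction, which we are assuming positive. Fix any $j\in C\sm\{i_0\}$. Since $i_0\sim_< j$ and $i_0\neq j$, the subgroups $H^{i_0}$ and $H^j$ are mixed with positive direction, so \cref{lem:HiHjmixed} produces an integer $u\in\Z$ realizing the interleaved chain
\[ \cdots\ll h^{i_0}_{-1,0}\ll h^{j}_{u-1,0}\ll h^{i_0}_{0,0}\ll h^{j}_{u,0}\ll h^{i_0}_{1,0}\ll h^j_{u+1,0}\ll\cdots. \]
In particular $h^{i_0}_{0,0}\ll h^j_{u,0}\ll h^{i_0}_{1,0}$, and this integer is unique since any other index $u'\neq u$ places $h^j_{u',0}$ outside the open interval between $[h^{i_0}_{0,0}]$ and $[h^{i_0}_{1,0}]$ in the chain above. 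Write $u(j)$ for this unique integer.

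Next I would observe that the $k-1$ Archimedean classes $\{[h^j_{u(j),0}]:j\in C\sm\{i_0\}\}$ are pairwise distinct by \cref{lem:hihjnotequiv}, which forbids Archimedean equivalence between elements drawn from distinct $H^i$'s. Since these classes all lie strictly between $[h^{i_0}_{0,0}]$ and $[h^{i_0}_{1,0}]$ in the total order $\ll$, there is a unique enumeration $i_1,\ldots,i_{k-1}$ of $C\sm\{i_0\}$ with
\[ h^{i_0}_{0,0}\ll h^{i_1}_{u(i_1),0}\ll h^{i_2}_{u(i_2),0}\ll\cdots\ll h^{i_{k-1}}_{u(i_{k-1}),0}\ll h^{i_0}_{1,0}; \]
setting $u_s:=u(i_s)$ then delivers the chain asserted in (i).

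The one subtle point is the uniqueness of $u(j)$, but this is essentially built into \cref{lem:HiHjmixed}: its interleaved chain puts exactly one index of $H^j$ into each gap between consecutive Archimedean classes of $H^{i_0}$, and $u(j)$ is by definition the one landing in the gap $\bigl([h^{i_0}_{0,0}],[h^{i_0}_{1,0}]\bigr)$. Part (ii) follows by the same template, replacing $h^{i_0}_{1,0}$ with $h^{i_0}_{-1,0}$ and invoking the negative-direction clause of \cref{lem:HiHjmixed}.
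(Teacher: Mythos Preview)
Your proof is correct and follows essentially the same approach as the paper: apply \cref{lem:HiHjmixed} to each $j\in C\sm\{i_0\}$ to obtain an element $h^{j}_{u(j),0}$ trapped between $h^{i_0}_{0,0}$ and $h^{i_0}_{1,0}$, then sort these by $\ll$. You supply a bit more detail than the paper (citing \cref{lem:hihjnotequiv} to justify that the intermediate classes are pairwise distinct, and noting uniqueness of $u(j)$), but none of this changes the underlying argument.
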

\begin{proof}
We prove only (i). By \cref{lem:HiHjmixed}, we know there exists integers $u_1,\ldots,u_{k-1}$ such that 
\[ h_{0,0}^{i_0} \ll h_{u_1,0}^{i_1} \ll h_{1,0}^{i_0}, \, \ldots \, ,
h_{0,0}^{i_0} \ll h_{u_{k-1},0}^{i_{k-1}} \ll h_{1,0}^{i_0}.\]
The elements $ h_{u_1,0}^{i_1}, \ldots, h_{u_{k-1},0}^{i_{k-1}}$ are linearly ordered with respect to $\ull$. So by reindexing as necessary, we can conclude that 
\[ h_{0,0}^{i_0} \ll h_{u_1,0}^{i_1} \ll \cdots \ll h_{u_{k-1},0}^{i_{k-1}} \ll h_{1,0}^{i_0}.\]
Conjugating the above relation by integral powers of $\zeta$ we get that
\[\cdots\ll h_{0,0}^{i_0} \ll h_{u_1,0}^{i_1} \ll \cdots \ll h_{u_{k-1},0}^{i_{k-1}} \ll h_{1,0}^{i_0}\ll h_{u_1+1,0}^{i_1} \ll \cdots \ll h_{u_{k-1}+1,0}^{i_{k-1}} \ll h_{2,0}^{i_0}\ll\cdots.\qedhere\]
\end{proof}

We next want to describe a collection of invariants that uniquely describe an ordering of $G_n$. Recall, we had fixed an ordering $<$ of $G_n$. We can define a \textit{positivity string} $\gamma= (\gamma_1,\ldots,\gamma_{n+2})$, with $\gamma_i\in \{-1,1\}$, that will encode the signs of the elements $h_{0,0}^{1},\ldots,h_{0,0}^{n},\lambda$ and $\zeta$ under $<$. 
The numbers in positions $1$ to $n$ will encode the signs of $h_{0,0}^{1},\ldots,h_{0,0}^{n}$, respectively. The number in position $n+1$ will encode the sign of $\lambda$ and the number in position $n+2$ will encode the sign of $\zeta$. A value of $-1$ will correspond to the element being negative and a value of $1$ will correspond to the element being positive.
It is worth pointing out that since $\zeta^{-z} h^i_{0,0} \zeta^z =h^i_{z,0}$, it suffices to specify the sign of $h^i_{0,0}$ and this determines the sign of $h^i_{z,0}$ for all $z\in \Z$. In particular for a fixed $i$, $h^i_{z,0}$ for all $z\in\Z$ have the same sign. This is an important fact that will help ensure that $G_n$ has only countably many orders. 

Fix the equivalence relation ${\sim_<}$ on $\bar n$ as defined above and suppose that we have $m$ many equivalence classes. We have the induced ordering relation $\lesssim$ on the equivalence classes of $\bar n$ under ${\sim_<}$.
When we say the $i$-th equivalence class, we will mean the $i$-th equivalence class with respect to the $\lesssim$ ordering. (So for example, the $m$-th equivalence class will refer to the greatest class under the $\lesssim$ order.)
Define a \textit{direction string} $\delta= (\delta_1,\ldots, \delta_m)$ where $\delta_i=-1$ if the direction of the $i$-th equivalence class is negative and $\delta_i=1$ if the direction of the $i$-th equivalence class is positive.
Furthermore, by \cref{lem:mxclasses}, for each equivalence class $[j]$ with at least two elements, we can assign to it an enumeration of its elements and a finite set of integers that encode a total ordering of the Archimedean classes of $\bigcup_{i\in [j]} H^i$.
For example, let $C=\{i_0,i_1,\ldots,i_{k-1}\}$ be an equivalence class with $k$ many elements and positive direction, we can fix a string of pairs of the form 
$\angles{i_1,u_1},\ldots,\angles{i_{k-1},u_{k-1}}$ such that the Archimedean classes of $H^{i_0},H^{i_1},\ldots H^{i_{k-1}}$ are totally ordered as encoded by this string in the following way:
\[\cdots\ll h_{0,0}^{i_0} \ll h_{u_1,0}^{i_1} \ll \cdots \ll h_{u_{k-1},0}^{i_{k-1}} \ll h_{1,0}^{i_0}\ll h_{u_1+1,0}^{i_1} \ll \cdots \ll h_{u_{k-1}+1,0}^{i_{k-1}} \ll h_{2,0}^{i_0}\ll\cdots.\]
To summarize, we have the following list of invariants we can assign to each order $<$ of $G_n$:
\begin{enumerate}[label={\textup{(\thesection.\Roman*)}}, ref={\textup{\thesection.\Roman*}}]
\item \label{invA}
A positivity string $\gamma= (\gamma_1,\ldots,\gamma_{n+2})$ that will encode the signs of the elements $h_{0,0}^{1},\ldots,h_{0,0}^{n},\lambda$ and $\zeta$ under $<$.  
\item \label{invB}
An equivalence relation ${\sim_<}$ on $\bar n$ with $m$ many equivalence classes.
\item \label{invC}
An order relation ${\lesssim}$ on the equivalence classes of $\bar n$ under ${\sim_<}$.  
\item \label{invD}
A direction string $\delta= (\delta_1,\ldots,\delta_m)$ where $\delta_i$ encodes if the $i$-th equivalence class is positive or negative. 
\item \label{invE}
For each equivalence class $[j]=\{i_0,i_1,\ldots,i_{k-1}\}$ with at least two elements, a string of pairs of the form $\angles{i_1,u_1},\ldots,\angles{i_{k-1},u_{k-1}}$ that encodes how the Archimedean classes of $\bigcup_{i\in [j]}H^i$ are totally ordered under the $\ull$ relation.
\end{enumerate}
Conversely, we can start by fixing these invariants and use them to build a unique ordering of $G_n$.
\begin{proposition}
A choice of invariants as in \eqref{invA}--\eqref{invE} uniquely determine an ordering of $G_n$.
\end{proposition}
\begin{proof}
We show how we can use the invariants to define a unique ordering $\prec$ of $G_n$.
Let $g\in G_n$ be an arbitrary nonidentity element. We can write $g=\rho\lambda^a\zeta^b$ for some unique $\rho\in P_n$, $\lambda^a \in A$ and $\zeta^b\in\Z$. We show how to determine whether $g$ is positive or negative under $\prec$. 
We make a simple observation about the orders of $G_n$.
By \cref{lem:Hiarchclasses}, since $\rho\ll \lambda^a \ll \zeta^b$, then we have that $g$  is positive if and only if  
\[ \zeta^b \text{ is positive; or }b=0\text{ and }\lambda^a\text{ is positive; or }a=b=0\text{ and }\rho\text{ is positive.} \]
Therefore all the orders of $G_n$ are lexicographical type orders.

We first look at $\gamma_{n+2}$ to determine the sign of $\zeta$. Suppose $\gamma_{n+2}=1$. If $b>0$, then $g$ is positive, and if $b<0$, then $g$ is negative. 
Next suppose $\gamma_{n+2}=-1$. If $b>0$, then $g$ is negative, and if $b<0$, then $g$ is positive. 
If $b=0$, then we look at $\gamma_{n+1}$. In an analogous fashion, if $\gamma_{n+1}=1$ and $a>0$, then $g$ is positive, and if $a<0$, then $g$ is negative.  If $\gamma_{n+1}=-1$ and $a>0$, then $g$ is negative and if $a<0$, then $g$ is positive. 
If $a=b=0$, then $g=\rho$. 
In this case, we can express $g$ as $h^{t_1}\cdots h^{t_s}$ where $h^{t_i}\in H^{t_i}$. 
We have now reduced to the case to showing how to order the subgroup $P_n$ given our invariants. 
By \cref{lem:allarchclasses}, the Archimedean classes of $P_n$ will always be the same with respect to any order that arises as a restriction of an order on $G_n$.
Informally, we want to use the information encoded by the invariants to linearly order the Archimedean classes of $P_n$ and this will allow us to order $P_n$.

Consider $P_n =\prod_{i=1}^n H^i=\prod_{i=1}^n \dsum_{z\in \Z}H_z^i$ and its index set $\bar n\times \Z=\{1,\ldots,n\}\times \Z$.
We want to linearly order the index set so as to encode the information given by the invariants \eqref{invB}--\eqref{invE} and then we will lexicographically order $P_n$ with respect to the defined ordering of the index set.
Suppose \eqref{invB} and \eqref{invC} give us that $\bar n=[l_1]\sqcup \cdots \sqcup [l_m]$ and $[l_1]\lesssim \cdots \lesssim [l_m]$.
Then we can start to order the index set as follows
\[ [l_1] \times \Z < \cdots < [l_m] \times \Z. \]
To be precise, we mean that if $[l_i]\lesssim [l_j]$ and $[l_i]\neq [l_j]$ then $(l,x)<(l',x')$ for all $l\in [l_i], l'\in [l_j]$ and $x,x'\in \Z$.
Let us further consider the equivalence class $[l_1]$ and say $[l_1]= \{k_0,k_1, \ldots, k_r\}$.
Assume that the direction of $[l_1]$ is given to be positive by \eqref{invD}, and $\angles{k_1,u_1}, \ldots,\angles{k_r,u_r}$ is the string of pairs given by \eqref{invE}.
Then we can further refine our ordering of the index set to be 
\[ \cdots< (k_0,0)<(k_1,u_1)<\cdots <(k_r,u_r)< (k_0,1) < (k_1,u_1+1)< \cdots <(k_r,u_r+1)<(k_0,2)<\cdots. \]
If the direction of $[l_1]$ is negative, then we will instead have 
\[ \cdots< (k_0,0)<(k_1,u_1)<\cdots <(k_r,u_r)< (k_0,-1) < (k_1,u_1-1)< \cdots <(k_r,u_r-1)<(k_0,-2)<\cdots. \]
We can follow a similar procedure for the rest of the equivalence classes of $\bar n$ and this will allow us to linearly order the index set $\bar n \times \Z$. 

We had reduced to the case when $g=h^{t_1}\cdots h^{t_s}$ where $h^{t_i}\in H^{t_i}$.
We can further express each $h^{t_i}$ in the form $h^{t_i}_{z_1}\cdots h^{t_i}_{z_v}$ where $h^{t_i}_{z_j}\in H^{t_i}_{z_j}$.
We want to define a pair $(t,z)\in \bar n\times \Z$ by 
\[(t,z)= \max\{(t_i,z_j)\in \bar n \times \Z \mid h^{t_i}_{z_j}\text{ appears as a substring in } g\}\] 
where we are taking the maximum with respect to the ordering $<$ of $\bar n\times \Z$.
Lastly, once we have defined such a pair $(t,z)$, to determine the sign of $g$ we consider the element $h^t_z$. We want to assign to $h^t_z$ a sign and that will be the sign of $g$. To set whether $h^t_z$ is positive or negative, we can use the positivity string $\gamma$ given by \eqref{invA} and the order described in \eqref{eq:Hizorder}. Note if $\gamma$ tells us that $h^t_{0,0}$ is negative, then we use the dual of the order in \eqref{eq:Hizorder}.

We finish off the proof by discussing why is the defined order on $P_n$ invariant under conjugation actions \eqref{eq:gpactionHi} and \eqref{eq:gpactionZHi}. 
For \eqref{eq:gpactionZHi}, conjugating $g=h^{t_1}\cdots h^{t_s}$ by some $\zeta^b \in \Z$ would mean that elements of the form $h^{t_i}_{z_j}$ that appear in $g$ will now be shifted to $h^{t_i}_{z_j+b}$ and the pair $(t,z)$ described above will now be defined as $(t,z+b)$. So to give a sign to $\zeta^{-b}g \zeta^b$ we will consider $h^{t}_{z+b}$. But notice that $h^t_z$ and $h^t_{z+b}$ will be given the same sign because of how the ordering in \eqref{eq:Hizorder} is defined. 
For \eqref{eq:gpactionHi}, if we conjugate $g=h^{t_1}\cdots h^{t_s}$ by some $\lambda^a\in A$, then $(t,z)$ will remain the same. So we need to look at $\lambda^{-a} h^t_z\lambda^a$ to give a sign to $\lambda^{-a} g\lambda^a$. If $h^t_z= h^{t,r_1}_{z,x_1}\cdots h^{t,r_v}_{z,x_v}$, then 
\[\lambda^{-a} h^t_z\lambda^a=  h^{t,r_1p_t^{y_1}}_{z,x_1+a2^z-y_1}\cdots h^{t,r_vp_t^{y_v}}_{z,x_v+a2^z-y_v}.\] 
Observe that 
\[ r_1p_t^{y_1}\cdot p_t^{x_1+a2^z-y_1}+\cdots+ r_vp_t^{y_v}\cdot p_t^{x_v+a2^z-y_v}= (r_1 p_t^{x_1}+\cdots+ r_v p_t^{x_v}) p_t^{a2^z}. \]
Since $p_t^{a2^z}$ is a positive real number, it will follow that $\lambda^{-a} h^t_z\lambda^a$  will be given the same sign as $h^t_z$ and, in turn, $\lambda^{-a} g\lambda^a$ will have the same sign as $g$.
\end{proof}

We mention that for the ordering $\prec$ described in the above proof, we can start with this ordering and then define the invariants \eqref{invA}--\eqref{invE} from it. In this case, we will find that we get exactly the same set of invariants. Therefore, fixing our set of invariants is the same as fixing an ordering of $G_n$ and so we see that this way of describing the orders of $G_n$ will actually completely describe any possible ordering of $G_n$. With this knowledge in hand, the proof of the next theorem follows readily.
\begin{theorem}
The group $G_n$ has exactly countably many distinct orders. In other words, the space of orders $\X(G_n)$ is countable.
\end{theorem}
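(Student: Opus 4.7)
The plan is to show that the assignment $<\;\mapsto\;(\gamma,\sim_<,\lesssim,\delta,\text{mixing string})$ developed above is a bijection between the orderings of $G_n$ and an explicit countable set of invariants. The upper bound on $\abs{\X(G_n)}$ then follows by counting the invariants, and the lower bound follows by varying the integer parameters in \ref{invE}.

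First I would prove injectivity by showing that every ordering is recovered by the algorithm of \cref{prop:invariantsOrders}. Let $<$ be an arbitrary ordering of $G_n$, extract its invariants \ref{invA}--\ref{invE}, and let $\prec$ denote the ordering produced from these invariants via \cref{prop:invariantsOrders}. I need $<\;=\;\prec$ on every nonidentity $g\in G_n$. Writing $g=\rho\lambda^a\zeta^b$ with $\rho\in P_n$, \cref{lem:Hiarchclasses} gives that when $b\neq 0$ the Archimedean class of $g$ under $<$ is $[\zeta]$, so the sign of $g$ agrees with the sign of $\zeta^b$, encoded by $\gamma(n+1)$; and the $\prec$-algorithm reads off exactly this bit. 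The case $b=0$, $a\neq 0$ is identical, using $\gamma(n)$ and the fact that $\lambda$ dominates every $h^i_{z,0}$. When $a=b=0$, expand $g=\prod h^{t_j}_{z_j}$; by \cref{prop:nprodclass} the sign of $g$ under $<$ equals the sign of the summand $h^t_z$ whose index pair $(t,z)$ is $\ll$-largest, and within $H^t_z$ the sign is given by \eqref{eq:Hizorder} combined with $\gamma(t-1)$, which is well-defined thanks to \cref{lem:HizOrder}. The $\prec$-algorithm performs the same computation, so it suffices to know that the linear order on $\bar n\times\Z$ built from \ref{invB}--\ref{invE} coincides with the $\ll$-order on the elements $h^t_z$ under $<$; this coincidence is exactly what \cref{lem:Hiarchclasses,lem:samedir,lem:HiHjmixed,lem:welldefined,lem:mxclasses} were set up to guarantee.

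Next I would count the invariants. The positivity string ranges over the finite set $2^{n+2}$; the equivalence relation $\sim_<$, the induced linear order $\lesssim$, and the direction string $\delta\in 2^m$ each range over finite sets (bounded by Bell numbers and factorials in $n$). For the mixing data \ref{invE}, each equivalence class of size $k\geq 2$ contributes a choice of enumeration (finite) together with $k-1$ integers in $\Z$, and there are at most $n$ classes in total, so the mixing data ranges over a countable set. Hence the full invariant system ranges over a countable set, and by the previous paragraph $\abs{\X(G_n)}\leq \aleph_0$. Conversely, since $n\geq 2$, I can fix any invariants that merge $\{1,2\}$ into a single equivalence class and then vary the integer $u_1\in\Z$ controlling the mixing between $H^1$ and $H^2$; by injectivity these choices yield infinitely many distinct orderings.

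The main obstacle is the verification in the middle of the first paragraph above: one must check carefully that the index ordering on $\bar n\times\Z$ constructed from \ref{invB}--\ref{invE} truly matches the $\ll$-order among all elements $h^t_z$ under the original $<$. The direction of each $H^i$ is handled by \cref{lem:Hiarchclasses}; consistency of directions within an equivalence class is \cref{lem:samedir}; the between-class relation $H^i\ll H^j$ is well-defined and captured by $\lesssim$ via \cref{lem:welldefined}; and the precise interleaving pattern inside an equivalence class of size $\geq 2$ is given by \cref{lem:mxclasses}. Once this matching of orderings is established, the passage from $<$ to its invariants and back is transparent, and the countability conclusion is immediate.
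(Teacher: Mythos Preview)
Your proposal is correct and follows essentially the same approach as the paper: reduce every order on $G_n$ to its invariants \ref{invA}--\ref{invE}, verify these determine the order uniquely (the paper handles this via \cref{prop:invariantsOrders} and the remark following it, while you spell out the injectivity argument more carefully), and then count the invariants. Your explicit treatment of the lower bound---varying $u_1$ to obtain infinitely many orders---is a detail the paper leaves implicit in the phrase ``countably many different choices.''
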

\begin{proof}
We have already seen that the orders of $G_n$ are in correspondence with the collection of invariants \eqref{invA}--\eqref{invE}. So to count the number of orders of $G_n$, we simply need to count all the different collections of invariants we can fix. 
For \eqref{invA}, we are picking a finite string of length $n+2$. There are only finitely many such strings. For \eqref{invB} and \eqref{invC}, there are only finitely many different equivalence relations we can define on the set $\bar{n}$ and only finitely many different ways to order any collection of finitely many equivalence classes. For \eqref{invD}, we are picking a finite string of length at most $n$.
For \eqref{invE}, there are only finitely many different ways to enumerate any particular equivalence class, but we also have to pick some finite set of integers for each equivalence class. And here we see that we can make countably many different choices when picking our finite sets of integers.
Hence we have countably many different possibilities when fixing our set of invariants. Therefore $G_n$ has exactly countably many distinct orders.
\end{proof}

\section{Cantor-Bendixson rank of \texorpdfstring{$\X(G_n)$}{X(Gn)}} 
The goal of this section is to prove that the Cantor-Bendixson rank of $\X(G_n)$ is $n$. 
Recall that we say an equivalence relation $R_1$ is a proper refinement of $R_2$ if $R_1\neq R_2$ and $R_1\subseteq R_2$. This implies that each equivalence class of $R_1$ is contained in an equivalence class of $R_2$. 
\begin{definition}
Let $<_1$ and $<_2$ be two orders of $G_n$, and let $\sim_{<_1}$ and $\sim_{<_2}$ be the corresponding equivalence classes, respectively. We will say that $<_2$ is \textit{more mixed relative to} $<_1$ if $\sim_{<_1}$ is a proper refinement of $\sim_{<_2}$.
\end{definition}
We start with proving the following useful lemma.
\begin{lemma} \label{lem:ordermoremixed}
For every positive cone $P\in \X(G_n)$ there exists an open neighborhood $U_P$ of $P$ such that if $Q\in U_P$ and $Q\neq P$, then $Q$ corresponds to an order of $G_n$ that is more mixed relative to $P$.
\end{lemma}
\begin{proof}
Let $P\in \X(G_n)$ be a positive cone and let $<$ be the order corresponding to $P$. Suppose ${\sim_<} = [l_1] \sqcup \cdots \sqcup [l_m]$ and $[l_1]\lesssim \cdots \lesssim [l_m]$. We can also fix the positivity string $\gamma$ given by invariant \eqref{invA}. 
We will define a collection of open neighborhoods that will be denoted by $U_0,U_{l_1},\ldots,U_{l_m}$ and then lastly we will set $U_P= U_0\cap U_{l_1}\cap \cdots \cap U_{l_m}$.

First suppose $[l_i]=\{k\}$. If the direction of $[l_i]$ is positive, then define $U_{l_i}$ to be the open neighborhood determined by the string of inequalities $1< h_{0,0}^{k,\gamma_k}< h_{1,0}^{k,\gamma_k}$.
In other words, if $Q\in U_{l_i}$ is a positive cone and $\prec$ is the order corresponding to $Q$, then ${\prec}$ must satisfy that $1\prec h_{0,0}^{k,\gamma_k}\prec h_{1,0}^{k,\gamma_k}$.
Note that by \cref{prop:archlessthanpos}, any order that satisfies $1< h_{0,0}^{k,\gamma_k}< h_{1,0}^{k,\gamma_k}$ must also satisfy $h_{0,0}^k\ll h^k_{1,0}$.
If the direction of $[l_i]$ is negative, then define $U_{l_i}$ to be the open neighborhood determined by $1< h_{0,0}^{k,\gamma_k}< h_{-1,0}^{k,\gamma_k}$.

Now suppose it is the case that $[l_i]= \{k_0,k_1,\dots,k_r\}$. Assume that $\angles{k_1,u_1}, \ldots,\angles{k_r,u_r}$ is the string of pairs given by invariant \eqref{invE}. If the direction of $[l_i]$ is positive, then define $U_{l_i}$ to be determined by the following string of inequalities
\[ 1< h^{k_0,\gamma_{k_0}}_{0,0}< h^{k_1,\gamma_{k_1}}_{u_1,0}< \cdots< h^{k_r,\gamma_{k_r}}_{u_r,0}< h^{k_0,\gamma_{k_0}}_{1,0}< h^{k_1,\gamma_{k_1}}_{u_1+1,0}<  \cdots< h^{k_r,\gamma_{k_r}}_{u_r+1,0}< h^{k_0,\gamma_{k_0}}_{2,0}.\]
If the direction of $[l_i]$ is negative, then define $U_{l_i}$ by the string of inequalities 
\[ 1< h^{k_0,\gamma_{k_0}}_{0,0}< h^{k_1,\gamma_{k_1}}_{u_1,0}< \cdots< h^{k_r,\gamma_{k_r}}_{u_r,0}< h^{k_0,\gamma_{k_0}}_{-1,0}< h^{k_1,\gamma_{k_1}}_{u_1-1,0}<  \cdots< h^{k_r,\gamma_{k_r}}_{u_r-1,0}< h^{k_0,\gamma_{k_0}}_{-2,0}.\]
Next, let $U_0$ be the open neighborhood determined by the string of inequalities 
\[ 1< h^{l_1,\gamma_{l_1}}_{0,0}< \cdots< h^{l_m,\gamma_{l_m}}_{0,0}< \lambda^{\gamma_{n+1}}< \zeta^{\gamma_{n+2}}.\]
Let $U_P= U_0 \cap U_{l_1}\cap \dots\cap U_{l_m}$. We claim $U_P$ is our desired neighborhood.

Let $Q\in U_P\sm \{P\}$ and let ${\prec}$ be the order corresponding to $Q$. First, note that invariant $\eqref{invA}$ will be the same for $Q$ by our choice of $U_P$. This is because if $k\in [l_i]$, then $1\prec h^{k,\gamma_k}_{v,0}$ holds for some $v\in \Z$ where $v$ depends on the exact string of inequalities that determine membership in $U_{l_i}$. 
Recall that $h^i_{z,0}$ for all $z\in\Z$ have the same sign. If $\gamma_k=1$, then $h^k_{v,0}$ is positive and, in turn,  $h^k_{0,0}$ is positive. If $\gamma_k=-1$, then $h^k_{v,0}$  and $h^k_{0,0}$ are both negative. Similarly for $\lambda$ and $\zeta$, by our definition of $U_0$, we will force these two elements to have the same sign as they did under the positive cone $P$.

Next, by our choice of $U_P$ we must have that $\usim_<\sub \usim_\prec$. This is because when $[l_i]$ contains more than one element, any order in $U_{l_i}$ must satisfy that $H^{k_0},H^{k_1},\ldots, H^{k_r}$ are mixed. In addition, it must be that $\usim_<\psub \usim_\prec$. To see this, suppose $\usim_< = \usim_\prec$. 
Then observe that invariants \eqref{invC}, \eqref{invD} and \eqref{invE} must be the same between $<$ and $\prec$ since the string of inequalities that determine membership in each $U_{l_i}$ specify the direction of the equivalence class $[l_i]$, i.e.\ invariant \eqref{invD}, and also specify a finite set of integers that encode an ordering of the Archimedean classes of $\bigcup_{l\in [l_i]} H^l$, i.e.\ invariant \eqref{invE}. 
Furthermore, $U_0$ encodes invariant \eqref{invC}. Thus $Q=P$, a contradiction to our choice of $Q$. Hence, we can conclude that $\usim_<\psub \usim_\prec$ and so by definition $Q$ is more mixed relative to $P$.
\end{proof}
One more lemma to help streamline the proof of our main result.
\begin{lemma} \label{lem:allmixedisol}
Let $P\in \X(G_n)$ be a positive cone. If all of the $H^i$ are mixed in the order corresponding to $P$, then $P$ is an isolated point.
\end{lemma}
\begin{proof}
Let $P\in \X(G_n)$ be a positive cone and let $<$ be the order corresponding to $P$. Suppose that all of the $H^i$ are mixed under $<$, in other words, ${\sim_<}=\bar n= \{1,\ldots,n\}$. Let $U_P$ be the open neighborhood of $P$ given by \cref{lem:ordermoremixed}. We claim that $U_P=\{P \}$. 

Suppose $Q\in U_P$ and $Q\neq P$. Let $\prec$ be the order corresponding to $Q$. Then $\prec$ is more mixed relative to $<$ by \cref{lem:ordermoremixed}. This means that ${\sim_<}$ is a proper refinement of ${\sim_\prec}$ and so ${\sim_<}\psub {\sim_\prec}$. But we also have that for any equivalence relation $E$ on the set $\bar n$, $E\sub {\sim_<}$ since ${\sim_<}$ consists of a single equivalence class. So ${\sim_<}\psub {\sim_\prec}$ and ${\sim_\prec}\sub {\sim_<}$, a contradiction. Therefore there is no $Q\in U_P$ with $Q\neq P$. 
Thus $U_P=\{P\}$ and $P$ is an isolated point.
\end{proof}
We finally arrive at our main result for this section.
\begin{theorem}
The Cantor-Bendixson rank of $\X(G_n)$ is $n$.
\end{theorem}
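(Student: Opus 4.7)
The plan is to establish, by induction on the number of equivalence classes of $\sim_<$, two bounds on the Cantor--Bendixson rank of an individual order $<$ of $G_n$: (a) if $\sim_<$ has exactly $k$ classes, then $\CB(<) \leq k-1$, and (b) if moreover those $k$ classes share a common direction, then $\CB(<) \geq k-1$. Granting these, (a) applied to arbitrary orders (which have at most $n$ classes under $\sim_<$) gives $\X(G_n)^{(n)} = \emptyset$, while (b) applied to the order $<_0$ on $G_n$ in which all $n$ subgroups $H^i$ lie in distinct $\sim_{<_0}$-classes of common positive direction (which exists by \cref{prop:invariantsOrders}) gives $<_0 \in \X(G_n)^{(n-1)}$. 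Hence $\CB(\X(G_n)) = n$.

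For (a), induct on $k$ with base case $k = 1$ supplied by \cref{lem:limitisolatedpts}(ii). For the inductive step, take the basic open neighborhood $U$ of $<$ defined by a long string of inequalities recording the positivity string \ref{invA}, the class ordering \ref{invC}, the directions \ref{invD}, and the internal mixing \ref{invE} of each equivalence class, in the style of \eqref{eq:mixing}. Every $\prec \in U$ satisfies all mixing inequalities required by $<$, so $\sim_\prec$ is a coarsening of $\sim_<$; if $\sim_\prec = \sim_<$, all five invariants of $\prec$ match those of $<$ and \cref{prop:invariantsOrders} forces $\prec = <$. Hence every $\prec \in U \setminus \{<\}$ has strictly fewer than $k$ classes, so by induction $\CB(\prec) \leq k-2$. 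Thus $U \cap \X(G_n)^{(k-1)} \subseteq \{<\}$, proving $< \notin \X(G_n)^{(k)}$ and $\CB(<) \leq k-1$.

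For (b), again induct on $k$; the base $k = 1$ is trivial. For $k \geq 2$, choose two adjacent classes $[a] \lesssim [b]$ under $\lesssim$ and let $a, b$ be their least integer elements, so $H^a \ll H^b$ under $<$. For each $u \in \Z$ define $\prec_u$ by retaining every invariant of $<$ except that $[a]$ and $[b]$ are merged into a single class, with the within-$[a]$ and within-$[b]$ mixings kept intact and the new mixing of $H^b$ into $H^a$ given by the single parameter $u$, imposing $h^a_{z,0} \ll h^b_{z+u,0} \ll h^a_{z+1,0}$ for all $z$. By \cref{lem:mxclasses} this uniquely determines the mixing parameters of the remaining elements of $[b]$ relative to $a$, so \cref{prop:invariantsOrders} yields a valid order $\prec_u$ with $k-1$ classes of common direction, and distinct $u$'s give distinct invariants \ref{invE} and therefore distinct orders. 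Any basic open neighborhood of $<$ is specified by a finite string of inequalities, and in $\prec_u$ the element $h^b_{w,0}$ lies between $h^a_{w-u,0}$ and $h^a_{w-u+1,0}$, so for $u$ sufficiently negative every inequality $h^a_{z,0} \ll h^b_{w,0}$ from the finite string (which $<$ satisfies because $H^a \ll H^b$) remains true in $\prec_u$; all remaining comparisons agree with $<$ by construction. Hence $\prec_u \to <$, and by induction each $\prec_u \in \X(G_n)^{(k-2)}$, forcing $< \in \X(G_n)^{(k-1)}$.

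The principal technical obstacle lies in step (b): one must verify both that the single parameter $u$ together with the preserved data genuinely specifies a consistent invariant \ref{invE} for the merged class (an appeal to \cref{lem:mxclasses} plus the fact that the internal mixing of an equivalence class is transparently parameterized by a string of integers relative to its least element), and that $\prec_u \to <$ against every basic open neighborhood of $<$, including those involving indices outside $[a] \cup [b]$. Both amount to careful bookkeeping rather than a new idea, but they are the steps where the informal description ``merge two adjacent classes while keeping the rest fixed'' must be rigorously justified.
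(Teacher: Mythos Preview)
Your argument is correct and follows essentially the same two-step strategy as the paper. Your upper bound (a) is exactly the paper's argument, and your phrasing in terms of the number of $\sim_<$-classes is in fact cleaner than the paper's looser talk of ``how many of the $H^i$ are mixed.'' For the lower bound the paper takes a more concrete route than your (b): instead of proving that \emph{every} order with $k$ same-direction classes has rank at least $k-1$, it exhibits a single explicit chain of families $\mathcal O_1,\ldots,\mathcal O_n$ (where $\mathcal O_k$ consists of orders in which $H^1,\ldots,H^k$ form one class and $H^{k+1},\ldots,H^n$ are singletons, all with positive sign and direction) and asserts that every point of $\mathcal O_k$ is a limit of $\mathcal O_{k+1}$. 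This is precisely your inductive step specialized to the case where $[b]$ is always a singleton, so the underlying convergence verification is identical; your formulation simply packages it as an induction and proves more than is needed.

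One small correction to your write-up of (b): the claim that the single parameter $u$ together with the retained within-$[a]$ and within-$[b]$ data \emph{uniquely} determines invariant \ref{invE} for the merged class is not quite right. When both $[a]$ and $[b]$ contain more than one element, there is residual freedom in how the non-base elements of $[b]$ interleave with those of $[a]\setminus\{a\}$ at a given level. This does not damage the argument: you may simply fix any consistent choice (for instance, take the enumeration in \cref{lem:mxclasses} for the merged class to list all of $[a]$ first and then all of $[b]$, with integers $v_1,\ldots,v_r,u,w_1+u,\ldots,w_s+u$), and the convergence $\prec_u\to{<}$ still goes through, since for $u$ sufficiently negative every $[b]$-index appearing in a fixed finite configuration sits above every $[a]$-index regardless of that choice.
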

\begin{proof}
We want to show that $\CB(\X(G_n))=n$. In fact, we will argue that $n$ is least positive integer such that $\X(G_n)^{(n)}=\emptyset$.  We first show that $\CB(\X(G_n))\leq n$. 
By \cref{lem:allmixedisol}, any order that has all of the $H^i$ mixed must be an isolated order.
Therefore $\X(G_n)'$ can only contain orders that have at most $n-1$ many of the subgroups $H^1,\ldots,H^n$ mixed and no orders where all of the $H^i$ are mixed.
Next, let $P\in \X(G_n)'$ where $n-1$ many of $H^1,\ldots,H^n$ are mixed. Let $U_P$ be the open neighborhood of $P$ given by \cref{lem:ordermoremixed}. Observe that all points in $U_P\sm \{P\}$ have all the $H^i$ mixed and so are isolated points in $\X(G_n)$. It follows that $U_P\cap \X(G_n)'=\{P\}$ and $P$ is an isolated point in $\X(G_n)'$.
Therefore in $\X(G_n)'$ any order that has $n-1$ many of $H^1,\ldots,H^n$ mixed will be isolated and so  $\X(G_n)^{(2)}$ can only contain orders that have at most $n-2$ many of $H^1,\ldots,H^n$ mixed.
Using \cref{lem:ordermoremixed} again, we can show that for any $Q\in \X(G_n)^{(2)}$ where $n-2$ many of $H^1,\ldots,H^n$ are mixed will be an isolated point in $\X(G_n)^{(2)}$ since all points in $U_Q\sm \{Q\}$ are isolated in $\X(G_n)'$.
Thus $\X(G_n)^{(3)}$ can only contain orders that have at most $n-3$ many of $H^1,\ldots,H^n$ mixed.
Continuing in this way, $\X(G_n)^{(n-1)}$ can only contain orders that have none of the $H^i$ mixed. By considering invariants \eqref{invA}--\eqref{invE}, it is not hard to see that $G_n$ has only finitely many orders with none of the $H^i$ mixed and so $\X(G_n)^{(n-1)}$ must be a finite set. Hence $\X(G_n)^{(n)}=\emptyset$ and $\CB(\X(G_n))\leq n$.

Next, we will show that there exists a point in $\X(G_n)$ with Cantor-Bendixson rank at least $n-1$ and this will imply that $\CB(\X(G_n))= n$.
This is so because if there exists some $P\in\X(G_n)$ with $\CB(P)\geq n-1$, then $\X(G_n)^{(n-1)}\neq \emptyset$. But since $\X(G_n)^{(n)}=\emptyset$, we will have that $\CB(\X(G_n))=n$, as desired. 

Let $\mathcal O_k\sub \X(G_n)$ for $2\leq k\leq n$ be defined as follows. An ordering ${<}\in \mathcal O_k$ if and only if $\zeta,\lambda$ and $h_{0,0}^{i}$ for all $i$ are positive under $<$; the direction of each $H^i$ is positive; the induced order $\ll$ on the Archimedean classes satisfies 
\[ H^{j}\ll H^{k+1} \ll\cdots \ll H^n \]
with $1\leq j\leq k$; and $<$ satisfies an inequality of the form
\[ h_{0,0}^{1} < h_{u_1,0}^{2}< \cdots < h_{u_{k-1},0}^{k} < h_{1,0}^{1} \]
for some $u_1,\ldots,u_{k-1}\in \Z$. 
Observe that all of the orders in $\mathcal O_k$ have $H^1,\ldots, H^{k}$ mixed.
Let $\mathcal O_1\sub \X(G_n)$ be defined as follows. An ordering ${<}\in \mathcal O_1$ if and only if 
$\zeta,\lambda$ and $h_{0,0}^{i}$ for all $i$ are positive under $<$; the direction of each $H^i$ is positive; and the induced order $\ll$ on the Archimedean classes satisfies 
\[ H^{1}\ll H^{2} \ll\cdots \ll H^n. \]
Now if ${\prec} \in \mathcal O_{n}$, then ${\prec}$ is an isolated order by \cref{lem:allmixedisol} and therefore $\CB({\prec})=0$.

Next, we claim that each order in $\mathcal O_k$ is a limit point of the set $\mathcal O_{k+1}$ for $k=1,\ldots,n-1$.
To show this, fix $k$ and let ${<}\in \mathcal O_k$. The order ${<}$ is described by the following set of invariants:
\begin{enumerate}[label={\textup{(\Roman*)}}]
\item The elements $\zeta,\lambda$ and $h^i_{0,0}$ for all $i$ are positive under $<$.
\item ${\sim_<} = \{1,2,\ldots,k\} \sqcup \{k+1\}\sqcup\ldots\sqcup \{n\}$.
\item $[k]\lesssim [k+1]\lesssim\cdots\lesssim [n]$.
\item The direction of each equivalence class is positive.
\item The pair of strings $\angles{2,u_1},\ldots, \angles{k,u_{k-1}}$.
\end{enumerate}
Fix a basic open set $U$ containing ${<}$. Fix a finite set of elements $g_1,\ldots,g_m$ that determine the basic open set $U$. Then $g_1,\ldots,g_m$ are all positive under $<$. We want to define an order ${\prec}\in \mathcal O_{k+1}\cap U$ such that ${\prec}\neq {<}$ and $g_1,\ldots,g_m$ are all positive under $\prec$.
We can write each $g_i$ as $h^{t_1}\cdots h^{t_s}\lambda^a \zeta^b$ with $h^{t_i}\in H^{t_i},\lambda^a\in A$ and $\zeta^b\in \Z$. Each $h^{t_i}$ can be written in the form $h^{t_i}_{z_1}\cdots h^{t_i}_{z_r}$ where $h^{t_i}_{z_j} \in H^{t_i}_{z_j}$.
Choose an integer $v>1$ large enough so that no element from $H^1_v\cup H^2_{v+u_1}\cup\cdots\cup H^k_{v+u_{k_1}}$ appears in any $g_i$ and choose an integer $w$ small enough so that no element from $H^{k+1}_w$ appears in any $g_i$.
Now define $\prec$ by the following invariants:
\begin{enumerate}[label={\textup{(\Roman*)}}]
\item The elements $\zeta,\lambda$ and $h^i_{0,0}$ for all $i$ are positive under $\prec$.
\item ${\sim_\prec} = \{1,2,\ldots,k,k+1\} \sqcup \{k+2\}\sqcup\ldots\sqcup \{n\}$.
\item $[k+1]\precsim [k+2]\precsim\cdots\precsim [n]$.
\item The direction of each equivalence class is positive.
\item The pair of strings $\angles{2,u_1},\ldots, \angles{k,u_{k-1}}, \angles{k+1,w-v}$.
\end{enumerate}
Our choice of $v$ and $w$ will ensure that the Archimedean classes under $\prec$ satisfy 
\[ h^1_{0,0}\ll h^2_{u_1,0}\ll \cdots\ll h^k_{u_{k-1}}\ll h^{k+1}_{w-v,0}\ll h^1_{1,0}\ll h^1_{v,0}\ll h^2_{v+u_1,0}\ll \cdots\ll h^k_{v+u_{k-1},0}\ll h^{k+1}_{w,0}\ll h^1_{v+1,0}. \]
In this new ordering $\prec$ the elements $g_1,\ldots,g_m$ will be positive because for each element of the form $h^{k+1}_y$ and $h^t_z$, for any $1\leq t\leq k$, that appears in some $g_i$, we will have that $h^t_z\ll h^{k+1}_y$. So the Archimedean classes that appear in $g_1,\ldots,g_m$ are ordered in the exact same way as they were under $<$. Thus $g_1,\ldots,g_m$ will remain positive under $\prec$ and we have proven our claim.

We now know that each $P\in \mathcal O_{n-1}$ is a limit point of the set $\mathcal O_n$. As a limit point of a set of rank $0$ points, we have that $\CB(P)\geq 1$ for all $P\in \mathcal O_{n-1}$. Similarly, we have that every point in $\mathcal O_{n-2}$ is a limit point of the set $\mathcal O_{n-1}$, and so $\CB(P)\geq 2$ for all $P \in \mathcal O_{n-2}$. 
Continuing along, we see that if $P\in \mathcal O_k$ for $1\leq k\leq n-1$, then $\CB(P)\geq n-k$. In particular, we have that every point in $\mathcal O_1$ has Cantor-Bendixson rank at least $n-1$. Thus there exists a point in $\X(G_n)$ with Cantor-Bendixson rank at least $n-1$ and this completes the proof.
\end{proof}

\bibliographystyle{abbrvnat}
\bibliography{refs.bib}

\end{document}